\DeclareMathAlphabet{\mathcal}{OMS}{cmsy}{m}{n} 
\DeclareMathOperator{\Der}{Der}
\DeclareMathOperator{\Homol}{H}
\DeclareMathOperator{\id}{id}
\DeclareMathOperator{\im}{Im}
\DeclareMathOperator{\Ker}{Ker}
\newcommand{\vertbar}{\>|\>}
\newcommand{\set}[2]{\ensuremath{\{ #1 \vertbar #2 \}}}
\newtheorem*{theorem}{Main Theorem}
\newtheorem{lemma}{Lemma}
\newtheorem*{conjecture}{Conjecture}
\theoremstyle{definition}
\newtheorem*{remark}{Remark}
\begin{document}

\title{A $\delta$-first Whitehead Lemma}

\author{Arezoo Zohrabi}
\address{University of Ostrava, Czech Republic}
\email{azohrabi230@gmail.com}

\author{Pasha Zusmanovich}
\email{pasha.zusmanovich@osu.cz}

\date{First written June 19, 2020; last minor revision November 12, 2022}
\thanks{J. Algebra \textbf{573} (2021), 476--491}

\begin{abstract}
We prove that $\delta$-derivations of a simple finite-dimensional Lie algebra
over a field of characteristic zero, with values in a finite-dimensional module,
are either inner derivations, or, in the case of adjoint module, multiplications
by a scalar, or some exceptional cases related to $\mathfrak{sl}(2)$. This can 
be viewed as an extension of the classical first Whitehead Lemma.
\end{abstract}

\maketitle

\section{Introduction}

Let $L$ be a Lie algebra, $V$ an $L$-module, with the module action denoted by
$\bullet$, and $\delta$ is an element of the base field. Recall that a 
\emph{$\delta$-derivation of $L$ with values in $V$} is a linear map 
$D: L \to V$ such that
\begin{equation}\label{eq-eqdelta}
D([x,y]) = - \delta y \bullet D(x) + \delta x \bullet D(y)
\end{equation}
for any $x,y \in L$. The set of all such maps for a fixed $\delta$ forms a 
vector space which will be denoted by $\Der_\delta(L,V)$.

In the case $V = L$, the adjoint module, we speak about just 
\emph{$\delta$-derivation of $L$}. The latter notion generalizes simultaneously
the notions of derivation (ordinary derivations are just $1$-derivations) and of
centroid (any element of the centroid is, obviously, a $\frac 12$-derivation).

$\delta$-derivations of Lie and other classes of algebras were a subject of an
intensive study (see, for example, \cite{filippov-5}--\cite{filippov}, 
\cite{hopkins}, \cite{delta}, \cite{leger-luks} and references therein; the 
latter paper is devoted to a more general notion of so-called quasiderivations 
which we do not discuss here). As a rule, algebras from ``nice'' classes 
(simple, prime, Kac--Moody, Lie algebras of vector fields, etc.) possess a very
``few'' nontrivial $\delta$-derivations. On the other hand, there are very few results, 
if at all, about $\delta$-derivations with values in modules.

The aim of this paper is to prove that $\delta$-derivations of simple 
finite-dimensional Lie algebras of characteristic zero, with values in
finite-dimensional modules, are, as a rule, just inner $1$-derivations. The 
exceptional cases are identity maps with values in adjoint modules (which are 
$\frac 12$-derivations), or are related to $\mathfrak{sl}(2)$ which, unlike all 
other simple Lie algebras, possesses nontrivial $\delta$-derivations. (Note the
occurrence of the new exceptional values of $\delta$, in addition to 
the exceptional values $\delta=-1,\frac 12, 1$ previously known from the 
literature). The exact statement runs as follows.

\begin{theorem}
Let $\mathfrak g$ be a semisimple finite-dimensional Lie algebra over an 
algebraically closed field $K$ of characteristic $0$, $V$ a finite-dimensional 
$\mathfrak g$-module, and $\delta \in K$. Then $\Der_\delta(\mathfrak g, V)$ is
nonzero if and only if one of the following holds:
\begin{enumerate}[\upshape(i)]
\item 
$\delta = 1$, in which case $\Der_1(\mathfrak g, V) \simeq V$ and consists of 
inner derivations of the form  $x \mapsto x \bullet v$ for some $v \in V$.

\item 
$\delta = -\frac 2n$ for some integer $n \ge 1$, or $\delta = \frac{2}{n+2}$ for
some integer $n \ge 3$, or $\delta = \frac 12$, and $V$ is decomposable into the
direct sum of irreducible $\mathfrak g$-modules in such a way that each direct 
summand of $V$ is a nontrivial irreducible module over exactly one of the simple direct 
summands of $\mathfrak g$, and a trivial module over the rest of them. 
\end{enumerate}

In the latter case, decomposing $\mathfrak g$ into the direct sum of simple 
algebras: $\mathfrak g = \mathfrak g_1 \oplus \dots \oplus \mathfrak g_m$, and 
writing 
$$
V = V_1^{1} \oplus \dots \oplus V_1^{k_1} \oplus
    V_2^{1} \oplus \dots \oplus V_2^{k_2} \oplus \dots \oplus
    V_m^{1} \oplus \dots \oplus V_m^{k_m} ,
$$
where $V_i^j$ is an irreducible module over $\mathfrak g_i$, and a trivial 
module over $\mathfrak g_\ell$, $\ell \ne i$, we have:
$$
\Der_{-\frac 2n}(\mathfrak g, V) \simeq 
\bigoplus\limits_{\mathfrak g_i \simeq \mathfrak{sl}(2)} \>
\bigoplus\limits_{V_i^j \simeq V(n)} \mathscr D_i^j ,
$$
where $n \ge 1$, $V(n)$ is the $(n+1)$-dimensional irreducible 
$\mathfrak{sl}(2)$-module, and $\mathscr D_i^j$ is the 
$(n+3)$-di\-men\-si\-o\-nal vector space of $(-\frac 2n)$-derivations of $\mathfrak g_i \simeq \mathfrak{sl}(2)$ with values
in $V_i^j \simeq V(n)$, as described in Lemma \ref{lemma-2}(\ref{it-2n}).

$$
\Der_{\frac{2}{n+2}}(\mathfrak g, V) \simeq 
\bigoplus\limits_{\mathfrak g_i \simeq \mathfrak{sl}(2)} \>
\bigoplus\limits_{V_i^j \simeq V(n)} \mathscr E_i^j,
$$
where $n \ge 3$, $\mathscr E_i^j$ is the $(n-1)$-dimensional vector space of 
$\frac{2}{n+2}$-derivations of $\mathfrak g_i \simeq \mathfrak{sl}(2)$ with 
values in $V_i^j \simeq V(n)$, as described in 
Lemma~\ref{lemma-2}(\ref{it-2n2}).

$$
\Der_{\frac 12}(\mathfrak g, V) \simeq 
\bigoplus\limits_{i=1}^m \> \bigoplus\limits_{V_i^j \simeq \mathfrak g_i} 
K_i^j ,
$$
where the inner summation is carried over all occurrences of $V_i^j$ being 
isomorphic to the adjoint module $\mathfrak g_i$, and $K_i^j$ is the 
one-dimensional vector space spanned by this isomorphism, considered as a map
$\mathfrak g_i \to V_i^j$.
\end{theorem}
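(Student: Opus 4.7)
The plan is to reduce the general statement to the classification of $\delta$-derivations from a simple Lie algebra into an irreducible module, already established in the preceding lemmas (notably Lemma~\ref{lemma-2} for $\mathfrak{sl}(2)$). By Weyl's theorem $V$ splits as a direct sum of irreducible $\mathfrak{g}$-modules, and since equation~(\ref{eq-eqdelta}) is linear in the target, $\Der_\delta(\mathfrak{g}, V)$ decomposes accordingly as the direct sum of $\Der_\delta(\mathfrak{g}, W)$ over the irreducible summands $W$. Over an algebraically closed field of characteristic zero, each such $W$ factors as an outer tensor product $W_1 \otimes \dots \otimes W_m$ with $W_i$ an irreducible $\mathfrak{g}_i$-module, so it suffices to treat $\Der_\delta(\mathfrak{g}, W)$ for $W$ of this shape.

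For $\delta = 0$, equation~(\ref{eq-eqdelta}) forces $D$ to vanish on $[\mathfrak{g},\mathfrak{g}] = \mathfrak{g}$, and for $\delta = 1$ Whitehead's first lemma gives case~(i). For the remaining values of $\delta$, the basic tool is the cross relation obtained from $[x,y]=0$ whenever $x \in \mathfrak{g}_i$, $y \in \mathfrak{g}_j$ with $i \ne j$:
$$
x \bullet D(y) = y \bullet D(x).
$$
In addition, each restriction $D_i := D|_{\mathfrak{g}_i}$ is itself a $\delta$-derivation of $\mathfrak{g}_i$ into $W$; since $\mathfrak{g}_\ell$ for $\ell \ne i$ acts trivially in this restriction, $W$ as a $\mathfrak{g}_i$-module is a direct sum of $\prod_{k \ne i} \dim W_k$ copies of $W_i$, and the preceding lemmas applied componentwise determine $D_i$ completely.

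When exactly one factor $W_i$ is nontrivial, $W$ reduces to an irreducible $\mathfrak{g}_i$-module with the other $\mathfrak{g}_\ell$ acting trivially, and the lemmas yield precisely the exceptional $\delta$-derivations listed in the theorem: the $\frac{1}{2}$-derivations furnished by the adjoint isomorphism $\mathfrak{g}_i \simeq V_i^j$ (contributing each $K_i^j$), and the $\mathfrak{sl}(2)$-specific families of Lemma~\ref{lemma-2} (contributing $\mathscr{D}_i^j$ and $\mathscr{E}_i^j$). Assembling these over all irreducible summands of $V$ produces the direct-sum descriptions in the theorem statement.

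The main obstacle is the mixed case: if $W$ has two or more nontrivial tensor factors, say $W_i$ and $W_j$, one must show $\Der_\delta(\mathfrak{g}, W) = 0$ for $\delta \ne 1$. The plan is to feed the explicit forms of $D_i$ and $D_j$ supplied by the lemmas (which, outside the $\mathfrak{sl}(2)$ exceptional set, force $D_i(x) = x \otimes v_0$ with $v_0$ in the complementary tensor factor whenever $W_i$ is the adjoint, and analogous tight normal forms otherwise) into the cross relation $x \bullet D(y) = y \bullet D(x)$. Expanding both sides in the tensor decomposition of $W$ and using that $\mathfrak{g}_i$ acts only on the $W_i$ slot while $\mathfrak{g}_j$ acts only on the $W_j$ slot, one obtains in the adjoint-adjoint subcase an identity of the shape $[x,u_0] \otimes y = x \otimes [y,v_0]$, and similarly shaped identities elsewhere; simplicity of $\mathfrak{g}_i, \mathfrak{g}_j$ and triviality of their centres then force $u_0 = v_0 = 0$, hence $D = 0$. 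The remaining subcases, including those featuring one or two $\mathfrak{sl}(2)$ factors with the exceptional families of Lemma~\ref{lemma-2}, are to be handled by the same mechanism, applied to the correspondingly more intricate normal forms.
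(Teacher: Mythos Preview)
Your reduction---Weyl's theorem, Lemma~\ref{lemma-dirsum}, the tensor factorization of irreducibles, and the cross relation from Lemma~\ref{lemma-sumalg}---matches the paper's scaffolding exactly. The divergence is in how you dispose of the mixed case (two or more nontrivial tensor factors).

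You propose to first classify each restriction $D_i$ via Lemmas~\ref{lemma-2} and~\ref{lemma-simple}, obtain explicit normal forms, and then feed those into $x\bullet D_j(y)=y\bullet D_i(x)$ case by case. This works, but the $\mathfrak{sl}(2)$--$\mathfrak{sl}(2)$ subcases you defer are genuinely tedious: with $\delta=-\tfrac 2n$ each $D_i$ lives in an $(n+3)$-dimensional space per copy of $V(n)$, and you would have to check that no combination survives the cross relation. The paper bypasses all of this with a single uniform observation (Lemma~\ref{lemma-n}): for fixed $y\in\mathfrak g_j$, the map $x\mapsto y\bullet D_i(x)$ is a $\delta$-derivation of $\mathfrak g_i$ into $W$, while the cross relation rewrites it as $x\mapsto x\bullet D_j(y)$, an inner $1$-derivation. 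A map that is simultaneously a $\delta$-derivation and a $1$-derivation of a perfect algebra with $\delta\ne 1$ vanishes identically; hence $y\bullet D_i(x)=0$ for all $x,y$, and since $W_j^{\mathfrak g_j}=0$ when $W_j$ is nontrivial irreducible, $D_i=0$. No classification of $D_i$ is needed, and the $\mathfrak{sl}(2)$ exceptional families never have to be confronted in the mixed setting.

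So your plan is sound but does unnecessary work; the paper's Lemma~\ref{lemma-n} is the cleaner route, and it is worth noting that the ``simultaneous $\delta$- and $1$-derivation'' trick is the one idea that replaces your entire case analysis.
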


Since $\delta$-derivations do not change under field extensions, this theorem 
essentially describes $\delta$-de\-ri\-va\-t\-i\-ons of a semisimple 
finite-dimensional Lie algebra with values in a finite-dimensional module, over 
an arbitrary field of characteristic zero. However, the formulation in the case
of an arbitrary field would involve forms of algebras and modules in the 
$\mathfrak{sl}(2)$-related cases, and elements of centroid instead of identity 
maps in the case $\delta = \frac 12$, and would be even more cumbersome, so we 
confine ourselves with the present formulation.

The classical first Whitehead Lemma states that for $\mathfrak g$ and $V$ as
in the statement of the main theorem, the first cohomology vanishes: 
$\Homol^1(\mathfrak g, V) = 0$. As the first cohomology is interpreted as the
quotient of derivations of $\mathfrak g$ with values in $V$ modulo inner 
derivations, and ordinary derivations are just $1$-derivations, this theorem can
be viewed as an extension of the first Whitehead Lemma. The standard proof of 
the first Whitehead Lemma involves the Casimir operator (see, for example, 
\cite[Chapter III, \S 7, Lemma 3]{jacobson}) and will not work in the case 
$\delta \ne 1$. Moreover, taken verbatim, the first Whitehead Lemma is not true
for arbitrary $\delta$-derivations, as the exceptional cases related to 
$\mathfrak{sl}(2)$, and to the value $\delta = \frac 12$ show. Therefore we 
employ a different approach, which, however, amounts to mere straightforward 
manipulations with the $\delta$-derivation equation (\ref{eq-eqdelta}), and 
utilizing standard facts about semisimple Lie algebras and their representations
(as exposed, for example, in the classical treatises \cite{bourbaki} and 
\cite{jacobson}).

On the other hand, this theorem is a generalization of the result saying that 
all nontrivial $\delta$-derivations of simple finite-dimensional Lie algebras of
characteristic zero are either ordinary derivations ($\delta = 1$), or multiple
of the identity map ($\delta = \frac 12$), or some special family of 
$(-1)$-derivations in the case of $\mathfrak{sl}(2)$ (see 
\cite[Corollary 3]{filippov-98}, \cite[Theorem 2 and Corollary 1]{filippov}, 
or \cite[Corollary 4.6]{leger-luks}).

Our initial interest in such sort of results stems from \cite{octonion}, where 
we computed $\delta$-derivations of certain nonassociative algebras which are of
interest in physics (what, in its turn, helped to determine symmetric 
associative forms on these algebras). These algebras have some classical Lie 
algebras like $\mathfrak{sl}(n)$ and $\mathfrak{so}(n)$ as subalgebras, and 
considering restriction of $\delta$-derivations to these subalgebras, and 
employing the theorem above, would allow to streamline some of the proofs in 
\cite{octonion}.

\section{Auxiliary lemmas}

The proof of the main theorem consists of a series of simple lemmas. The ground 
field $K$ is assumed to be arbitrary, unless stated otherwise.

\begin{lemma}\label{lemma-dirsum}
Let $L$ be a Lie algebra, and let an $L$-module $V$ be decomposable into a 
direct sum of submodules: $V = \bigoplus_i V_i$. Then for any $\delta \in K$,
\begin{equation*}
\Der_\delta(L,V) \simeq \bigoplus_i \Der_\delta(L,V_i) .
\end{equation*}
\end{lemma}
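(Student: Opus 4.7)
The plan is to exploit the linearity of the $\delta$-derivation equation (\ref{eq-eqdelta}) in the target: whenever $\varphi : V \to W$ is an $L$-module homomorphism and $D \in \Der_\delta(L,V)$, then $\varphi \circ D \in \Der_\delta(L,W)$, since applying $\varphi$ to both sides of (\ref{eq-eqdelta}) and using $\varphi(x \bullet v) = x \bullet \varphi(v)$ preserves the identity. Applied to the decomposition $V = \bigoplus_i V_i$ as $L$-modules, the canonical projections $\pi_i : V \to V_i$ and the canonical inclusions $\iota_i : V_i \to V$ are all $L$-equivariant, which is all that is needed.

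Concretely, I would define one map by
$$
\Phi : \Der_\delta(L,V) \longrightarrow \bigoplus_i \Der_\delta(L,V_i), \qquad D \longmapsto (\pi_i \circ D)_i ,
$$
and its prospective inverse by
$$
\Psi : \bigoplus_i \Der_\delta(L,V_i) \longrightarrow \Der_\delta(L,V), \qquad (D_i)_i \longmapsto \sum_i \iota_i \circ D_i .
$$
By the observation above, $\Phi$ indeed lands in $\delta$-derivations componentwise, and $\Psi$ produces a $\delta$-derivation because the right-hand side of (\ref{eq-eqdelta}) is additive in $D$. Both maps are manifestly $K$-linear, and the identities $\sum_i \iota_i \pi_i = \id_V$ and $\pi_j \iota_i = \delta_{ij} \id_{V_i}$ (Kronecker, not the parameter) give immediately $\Psi \circ \Phi = \id$ and $\Phi \circ \Psi = \id$.

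There is essentially no obstacle; the proof is a bookkeeping exercise. The only mild point is that for the target $\bigoplus_i \Der_\delta(L,V_i)$ to make sense as a genuine direct sum (rather than a direct product) when $I$ is infinite, one needs $\pi_i \circ D$ to vanish for all but finitely many $i$; this is automatic in the application to the main theorem, where $V$ is finite-dimensional and the index set is finite, so I would simply state and apply the lemma in that context.
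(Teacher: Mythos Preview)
Your proposal is correct and follows essentially the same approach as the paper: the paper's proof simply remarks that the argument is trivial and that each summand $\Der_\delta(L,V_i)$ is obtained by composing with the canonical projection $V \to V_i$, which is exactly your map $\Phi$. Your version merely spells out the inverse $\Psi$ and the verification in more detail, and your caveat about infinite index sets is a reasonable aside but not needed for the paper's applications.
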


(Here and below the direct sum $\oplus$ is understood in an appropriate 
category: either vector spaces, or Lie algebra modules, or Lie algebras, what 
should be clear from the context).

\begin{proof}
The proof is trivial, and repeats the proof of the similar statement for 
ordinary derivations. Each direct summand $\Der_\delta(L,V_i)$ is obtained by 
composition of $\delta$-derivations from $\Der_\delta(L,V)$ with the canonical 
projection $V \to V_i$.
\end{proof}

\begin{lemma}\label{lemma-sumalg}
Let $L_1$ and $L_2$ be Lie algebras, and $V$ is simultaneously an $L_1$- and 
an $L_2$-module. Then for any nonzero $\delta \in K$,
\begin{multline*}
\Der_\delta(L_1 \oplus L_2, V) \\ \simeq 
\set{(D_1,D_2) \in \Der_\delta(L_1,V) \oplus \Der_\delta(L_2,V)}
{x_1 \bullet D_2(x_2) = x_2 \bullet D_1(x_1) 
\text{ for any } x_1 \in L_1, x_2 \in L_2} .
\end{multline*}
\end{lemma}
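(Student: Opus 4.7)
The plan is to construct an isomorphism in both directions explicitly, based on restricting a $\delta$-derivation on $L_1 \oplus L_2$ to each summand. Given $D \in \Der_\delta(L_1 \oplus L_2, V)$, I would set $D_1 = D|_{L_1}$ and $D_2 = D|_{L_2}$; since each $L_i$ is a subalgebra, restricting equation (\ref{eq-eqdelta}) shows that each $D_i$ lies in $\Der_\delta(L_i, V)$. Conversely, any pair $(D_1, D_2)$ assembles into a linear map $D(x_1 + x_2) := D_1(x_1) + D_2(x_2)$, and these two constructions are manifestly mutually inverse, so the content of the lemma lies in identifying which pairs arise.

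To obtain the compatibility condition, I would exploit that $[x_1, x_2] = 0$ in $L_1 \oplus L_2$ for $x_1 \in L_1, x_2 \in L_2$. Applying (\ref{eq-eqdelta}) yields
$$
0 = D([x_1, x_2]) = -\delta\, x_2 \bullet D_1(x_1) + \delta\, x_1 \bullet D_2(x_2),
$$
and since $\delta \ne 0$, one divides through to obtain exactly $x_1 \bullet D_2(x_2) = x_2 \bullet D_1(x_1)$. This is where the hypothesis $\delta \ne 0$ is actually used; for $\delta = 0$ the identity holds trivially and no such constraint appears, which is consistent with the failure of the statement in that case.

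For the converse direction, I would take a pair $(D_1, D_2)$ satisfying the compatibility condition and verify (\ref{eq-eqdelta}) for $D$ on arbitrary elements $x = x_1 + x_2$ and $y = y_1 + y_2$. Expanding $[x, y] = [x_1, y_1] + [x_2, y_2]$ and using the $\delta$-derivation property of $D_1$ and $D_2$ on the left-hand side, and distributing the module action on the right-hand side, the discrepancy reduces to the four cross-terms
$$
\delta\bigl(y_1 \bullet D_2(x_2) - x_2 \bullet D_1(y_1)\bigr) + \delta\bigl(y_2 \bullet D_1(x_1) - x_1 \bullet D_2(y_2)\bigr),
$$
which vanishes by two applications of the compatibility condition (with appropriate choices of arguments).

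I do not foresee any serious obstacle here; the proof is a direct bookkeeping of the $\delta$-derivation equation together with the trivial observation that $[L_1, L_2] = 0$. The only conceptual point worth highlighting is the role of $\delta \ne 0$ in extracting the compatibility relation, which I would state explicitly so that the reader sees why the zero case is genuinely excluded.
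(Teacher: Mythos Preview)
Your proposal is correct and follows essentially the same approach as the paper: restrict $D$ to $L_1$ and $L_2$ to obtain $D_1$ and $D_2$, and observe that the $\delta$-derivation condition for mixed pairs $x_1 \in L_1$, $x_2 \in L_2$ is equivalent (since $\delta \ne 0$) to the stated compatibility relation. You are in fact slightly more thorough than the paper, which leaves the converse verification implicit, whereas you spell out the cross-term cancellation explicitly.
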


(Here and below, $\bullet$ denotes the action of a Lie algebra on its module).

\begin{proof}
Let $D$ be a $\delta$-derivation of $L_1 \oplus L_2$ with values in $V$. Its
restrictions $D_1$ and $D_2$, on $L_1$ and $L_2$ respectively, are, obviously,
$\delta$-derivations with values in $V$, and the condition (\ref{eq-eqdelta})
written for arbitrary pair $x_1 \in L_1$ and $x_2 \in L_2$, is equivalent to the
equality 
\begin{equation}\label{eq-12}
x_1 \bullet D_2(x_2) = x_2 \bullet D_1(x_1) .
\end{equation}
\end{proof}

Recall that a Lie algebra $L$ is called \emph{perfect} if it coincides with its
own commutant: $[L,L] = L$.

\begin{lemma}\label{lemma-n}
Let $L_i$ be a perfect Lie algebra, and $V_i$ an $L_i$-module, $i=1,\dots,n$. 
Then for any $\delta \ne 1$, 
\begin{equation}\label{eq-n}
\Der_\delta(L_1 \oplus \dots \oplus L_n, V_1 \otimes \dots \otimes V_n) \simeq 
\bigoplus_{i=1}^n 
\Big(
V_1^{L_1} \otimes \dots \otimes V_{i-1}^{L_{i-1}} \otimes \Der_\delta(L_i,V_i) 
\otimes V_{i+1}^{L_{i+1}} \otimes \dots \otimes V_n^{L_n}
\Big) .
\end{equation}
\end{lemma}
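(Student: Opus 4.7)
The plan is to prove the formula by induction on $n$, with the case $n=2$ carrying all the conceptual content and the general case following mechanically.

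For the case $n=2$, I would invoke Lemma~\ref{lemma-sumalg} to identify a $\delta$-derivation $D: L_1 \oplus L_2 \to V_1 \otimes V_2$ with a pair $(D_1,D_2)$ of $\delta$-derivations $D_i : L_i \to V_1 \otimes V_2$ satisfying the compatibility
\begin{equation*}
x_1 \bullet D_2(x_2) = x_2 \bullet D_1(x_1), \quad x_1 \in L_1, \> x_2 \in L_2 .
\end{equation*}
Here $L_1$ and $L_2$ act on $V_1 \otimes V_2$ through $V_1$ and $V_2$ respectively, so these two actions commute.

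The crux is to show that perfectness plus $\delta \ne 1$ forces $D_1(L_1) \subseteq V_1 \otimes V_2^{L_2}$ and $D_2(L_2) \subseteq V_1^{L_1} \otimes V_2$. To this end, fix $x_2 \in L_2$ and write $x_1 = [a,b]$ with $a,b \in L_1$; expanding the right-hand side of the compatibility via the $\delta$-derivation equation for $D_1$ and using the compatibility again (along with the commuting of the two actions) gives
\begin{equation*}
x_2 \bullet D_1([a,b])
= -\delta \, b \bullet (x_2 \bullet D_1(a)) + \delta \, a \bullet (x_2 \bullet D_1(b))
= \delta \, [a,b] \bullet D_2(x_2),
\end{equation*}
whereas the compatibility directly reads $x_2 \bullet D_1([a,b]) = [a,b] \bullet D_2(x_2)$. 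Subtracting yields $(1-\delta) \, [a,b] \bullet D_2(x_2) = 0$; since $L_1$ is perfect and $\delta \ne 1$, $x_1 \bullet D_2(x_2) = 0$ for all $x_1 \in L_1$, so $D_2(x_2) \in V_1^{L_1} \otimes V_2$. A symmetric argument using perfectness of $L_2$ gives $D_1(x_1) \in V_1 \otimes V_2^{L_2}$. Consequently the compatibility condition becomes vacuous (both sides vanish), and $D_i$ factors as a $\delta$-derivation of $L_i$ with values in $V_i$ tensored with an element of the corresponding invariant space of the other factor. This yields the isomorphism (\ref{eq-n}) for $n=2$.

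For the inductive step from $n-1$ to $n$, I would group the first $n-1$ summands and apply the $n=2$ case to $L_1 \oplus \dots \oplus L_{n-1}$ (still perfect) and $L_n$, with modules $V_1 \otimes \dots \otimes V_{n-1}$ and $V_n$. Using the identification $(V_1 \otimes \dots \otimes V_{n-1})^{L_1 \oplus \dots \oplus L_{n-1}} = V_1^{L_1} \otimes \dots \otimes V_{n-1}^{L_{n-1}}$ (each $L_i$ acting only on the $i$-th factor) together with the inductive hypothesis applied to $\Der_\delta(L_1 \oplus \dots \oplus L_{n-1}, V_1 \otimes \dots \otimes V_{n-1})$, the resulting decomposition matches the right-hand side of (\ref{eq-n}) term by term.

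I expect the main obstacle to be keeping careful track of which Lie algebra acts through which tensor factor when expanding the bracketed computation above; the hypothesis $\delta \ne 1$ enters only at the last step of that computation, but without it the argument collapses, so this is also conceptually where the restriction comes from.
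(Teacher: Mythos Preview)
Your argument is correct and, in the crucial $n=2$ step, actually cleaner than the paper's. Both proofs invoke Lemma~\ref{lemma-sumalg} and finish by induction, but the paper first uses Lemma~\ref{lemma-dirsum} to write $D_1 \in \Der_\delta(L_1,V_1)\otimes V_2$ and $D_2 \in V_1 \otimes \Der_\delta(L_2,V_2)$, then analyzes the resulting tensor identity by asserting a dichotomy (either the component derivations are inner, or the tensor coefficients are invariant) whose justification is left implicit. Your route bypasses that: writing $x_1=[a,b]$ and combining the $\delta$-derivation identity with commutativity of the two actions yields $(1-\delta)\,x_1 \bullet D_2(x_2)=0$ directly, so perfectness and $\delta\ne 1$ immediately force $D_2(L_2)\subseteq V_1^{L_1}\otimes V_2$ (and symmetrically for $D_1$). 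This makes the role of both hypotheses fully transparent and avoids the tensor-equation analysis altogether. One small omission: Lemma~\ref{lemma-sumalg} is stated only for $\delta\ne 0$, and the paper dispatches $\delta=0$ in a sentence at the outset (for perfect $L_i$ both sides of (\ref{eq-n}) vanish trivially); you should add that remark before invoking Lemma~\ref{lemma-sumalg}.
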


Here the action of $L_1 \oplus \dots \oplus L_n$ on the tensor product 
$V_1 \otimes \dots \otimes V_n$ is assembled, as usual, from the actions of 
$L_i$'s on $V_1 \otimes \dots \otimes V_n$, where $L_i$ acts on the tensor
factor $V_i$, leaving all other factors intact; $V^L$ denotes the submodule of 
invariants of an $L$-module $V$. 

\begin{proof}
As the claim is trivial in the case $\delta = 0$, we may assume 
$\delta \ne 0,1$. We will prove the case $n=2$, the general case easily follows
by induction.

Since $V_1 \otimes V_2$, as an $L_1$-module, is isomorphic to a direct sum of a
number of copies of $V_1$, parametrized by $V_2$, by Lemma \ref{lemma-dirsum}
we have 
$$
\Der_\delta (L_1, V_1 \otimes V_2) \simeq \Der_\delta(L_1,V_1) \otimes V_2 ,
$$
and similarly, 
$$
\Der_\delta (L_2, V_1 \otimes V_2) \simeq V_1 \otimes \Der_\delta(L_2,V_2) .
$$

Apply Lemma \ref{lemma-sumalg}, assuming $V = V_1 \otimes V_2$. The condition 
(\ref{eq-12}), written for 
$$
\sum_{i\in \mathbb I} D_1^i \otimes v_2^i \in \Der_\delta(L_1,V_1) \otimes V_2 ,
$$
and
$$
\sum_{i\in \mathbb J} v_1^i \otimes D_2^i \in V_1 \otimes \Der_\delta(L_2,V_2) ,
$$
where $D_1^i \in \Der_\delta(L_1,V_1)$, $D_2^i \in \Der_\delta(L_2,V_2)$, $v_1^i \in V_1$, $v_2^i \in V_2$, is 
equivalent to
$$
\sum_{i\in \mathbb J} (x_1 \bullet v_1^i) \otimes D_2^i(x_2) = 
\sum_{i\in \mathbb I} D_1^i(x_1) \otimes (x_2 \bullet v_2^i) .
$$

It follows that one of the following holds: 

(i) $D_1^i(x_1) = x_1 \bullet u_i^1$ for some $u_i^1 \in V_1$, any $x_1 \in L_1$
and any $i\in \mathbb I$, and 
    $D_2^i(x_2) = x_2 \bullet u_i^2$ for some $u_i^2 \in V_2$, any $x_2 \in L_2$
and any $i\in \mathbb J$. 

(ii) $v_1^i \in V_1^{L_1}$ for any $i\in \mathbb I$, and 
     $v_2^i \in V_2^{L_2}$ for any $i\in \mathbb J$.

In the case (i), we have that $D_1^i$ is simultaneously a $\delta$-derivation 
and an $1$-derivation, what implies $(\delta - 1) D_1^i([L,L]) = 0$, and hence 
$D_1^i$ vanishes for any $i\in \mathbb I$. Similarly, $D_2^i$ vanishes for any 
$i\in \mathbb J$. Consequently, we are in case (ii), and
$$
\Der_\delta(L_1 \oplus L_2, V_1 \otimes V_2) \simeq
\Big(\Der_\delta(L_1,V_1) \otimes V_2^{L_2}\Big) \oplus 
\Big(V_1^{L_1} \otimes \Der_\delta(L_2,V_2)\Big) ,
$$
what is the particular case of formula (\ref{eq-n}) for $n=2$.
\end{proof}

\begin{lemma}\label{lemma-4}
In the conditions of Lemma \ref{lemma-n} assume additionally that each $V_i$ is
irreducible. Then \linebreak
$\Der_\delta(L_1 \oplus \dots \oplus L_n, V_1 \otimes \dots \otimes V_n)$ is:
\begin{enumerate}[\upshape(i)]
\item
zero, if either $V_i = K$ for all $i$, or there are at least two different $i$'s
such that $V_i \ne K$;
\item
isomorphic to $\Der(L_i,V_i)$, if there is exactly one $i$ such that 
$V_i \ne K$.
\end{enumerate}
\end{lemma}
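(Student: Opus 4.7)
The plan is to derive this as a direct corollary of Lemma \ref{lemma-n} by inspecting when each summand on its right-hand side survives. Each summand is a tensor product whose factors are either $V_j^{L_j}$ for $j \ne i$, or $\Der_\delta(L_i, V_i)$ in the $i$-th slot, so everything reduces to describing the submodules of invariants $V_j^{L_j}$ and understanding when $\Der_\delta(L_i, V_i)$ vanishes.

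The first key observation is an irreducibility dichotomy: since $V_j$ is an irreducible $L_j$-module, $V_j^{L_j}$ is a submodule and therefore equals either $0$ or the whole of $V_j$; the latter alternative forces the action of $L_j$ to be trivial, and by irreducibility this means $V_j \simeq K$. Hence $V_j^{L_j} = V_j$ when $V_j = K$ and $V_j^{L_j} = 0$ otherwise. The second observation is that $\Der_\delta(L, K) = 0$ whenever $L$ is perfect: the right-hand side of (\ref{eq-eqdelta}) vanishes identically when the action on the target is trivial, so any such $D$ is zero on $[L,L] = L$. This already disposes of the subcase of (i) in which $V_i = K$ for every $i$, since then the middle tensor factor of every summand is zero.

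Next I would split into cases according to how many indices $i$ have $V_i \ne K$. If there are two distinct indices $i_1 \ne i_2$ with $V_{i_1}, V_{i_2} \ne K$, then in any summand indexed by $i$ at least one of the factors $V_{i_1}^{L_{i_1}}$, $V_{i_2}^{L_{i_2}}$ appears (the one whose index is not $i$) and vanishes, so the whole sum is zero, completing case (i). If exactly one index $i_0$ has $V_{i_0} \ne K$, then each summand with $i \ne i_0$ still contains $V_{i_0}^{L_{i_0}} = 0$ as a factor and therefore dies, while the summand with $i = i_0$ has every remaining tensor factor equal to $K$ and so collapses to $\Der_\delta(L_{i_0}, V_{i_0})$, yielding case (ii).

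I do not expect any genuine obstacle: once Lemma \ref{lemma-n} is available, the argument is essentially bookkeeping, with the only two ingredients beyond the previous lemma being the irreducibility trichotomy for $V_j^{L_j}$ and the use of perfectness of the $L_j$ to kill $\delta$-derivations into the trivial module. The mildly subtle point to remember is that this last step is what prevents the all-trivial subcase of (i) from contributing.
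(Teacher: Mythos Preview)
Your proposal is correct and takes essentially the same approach as the paper: both invoke Lemma~\ref{lemma-n}, use irreducibility to conclude $V_j^{L_j}=0$ unless $V_j=K$, and use perfectness of $L_i$ to get $\Der_\delta(L_i,K)=0$. Your write-up is simply a more detailed case-by-case unpacking of what the paper compresses into two sentences.
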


\begin{proof}
Since each $V_i$ is irreducible, either $V_i^{L_i} = 0$, or $V_i = K$. Since
$\Der_\delta(L_i,K) = 0$, the claim follows from formula (\ref{eq-n}).
\end{proof}

\begin{lemma}\label{lemma-grad}
Let $L = \bigoplus_{\alpha \in G} L_\alpha$ be a Lie algebra graded by an 
abelian group $G$, and $V = \bigoplus_{\alpha \in G} V_\alpha$ a graded
$L$-module (i.e., $L_\alpha \bullet V_\beta \subseteq V_{\alpha + \beta}$ for 
any $\alpha, \beta \in G$). Then for any $\delta \in K$,
$$
\Der_\delta(L,V) = \bigoplus_{\alpha \in G} \Der_\delta^\alpha (L,V) ,
$$
where 
$$
\Der_\delta^\alpha (L,V) = \set{D \in \Der_\delta(L,V)}{D(L_\beta) \subseteq 
V_{\beta - \alpha} \text{ for any } \beta \in G} .
$$
\end{lemma}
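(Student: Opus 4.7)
The plan is the standard grading argument: split each $\delta$-derivation $D$ into homogeneous pieces with respect to $G$ and verify that every piece remains a $\delta$-derivation. Concretely, for each $\alpha \in G$, I would define $D^\alpha : L \to V$ by setting, on a homogeneous element $x \in L_\beta$, $D^\alpha(x)$ to be the $V_{\beta-\alpha}$-component of $D(x)$, and then extending linearly. By construction $D^\alpha(L_\beta) \subseteq V_{\beta-\alpha}$, so $D^\alpha$ is a candidate element of $\Der_\delta^\alpha(L,V)$, and $D = \sum_\alpha D^\alpha$ holds pointwise.

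The substantive step is to check that if $D$ satisfies (\ref{eq-eqdelta}), then so does each $D^\alpha$. I would evaluate (\ref{eq-eqdelta}) on homogeneous $x \in L_\beta$ and $y \in L_\gamma$ and track $G$-degrees of each term: since $[x,y] \in L_{\beta+\gamma}$ and $L_\gamma \bullet V_{\beta-\alpha'} \subseteq V_{\beta+\gamma-\alpha'}$, all three expressions $D^{\alpha'}([x,y])$, $y \bullet D^{\alpha'}(x)$, and $x \bullet D^{\alpha'}(y)$ lie in the same graded piece $V_{\beta+\gamma-\alpha'}$. Substituting $D = \sum_{\alpha'} D^{\alpha'}$ into (\ref{eq-eqdelta}) and projecting onto the fixed piece $V_{\beta+\gamma-\alpha}$ isolates, for each $\alpha$, exactly the $\delta$-derivation identity for $D^\alpha$ on homogeneous arguments; bilinearity of both sides of the identity then promotes it to arbitrary $x, y \in L$.

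Directness of the sum $\bigoplus_\alpha \Der_\delta^\alpha(L,V)$ inside $\Der_\delta(L,V)$ is automatic, since distinct components $D^\alpha$ take values in disjoint graded pieces of $V$ on any fixed $L_\beta$, and the projection formula that defined $D^\alpha$ recovers it uniquely from $D$. I do not expect a genuine obstacle; the only minor point worth flagging is that the pointwise sum $D = \sum_\alpha D^\alpha$ becomes a finite sum — hence lands in the direct sum rather than the direct product — as soon as only finitely many graded components of $L$ or $V$ are nonzero, which is automatic in all intended applications to finite-dimensional semisimple Lie algebras.
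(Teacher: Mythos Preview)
Your proposal is correct and is exactly the standard argument the paper intends: the paper's own proof is a single sentence deferring to the analogous result for ordinary derivations (citing \cite[Proposition 1.1]{farnsteiner}), and your homogeneous-component decomposition is precisely that argument written out. Your caveat about finiteness of the sum $\sum_\alpha D^\alpha$ is well taken and harmless here, since the paper only applies the lemma in finite-dimensional situations.
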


\begin{proof}
Exactly the same simple arguments as in the case of ordinary derivations (see, 
for example, \cite[Proposition 1.1]{farnsteiner}).
\end{proof}

$\delta$-derivations from the space $\Der_\delta^\alpha(L,V)$ are said to be
\emph{of weight $\alpha$}.

\begin{lemma}\label{lemma-cent}
Let $L$ be a simple Lie algebra, $V$ an irreducible $L$-module, and $D: L \to V$
a nonzero linear map such that $D([x,y]) = x \bullet D(y)$ for any $x, y \in L$.
Then $V$ is isomorphic to the adjoint module.
\end{lemma}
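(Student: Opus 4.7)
The plan is to observe that the hypothesis $D([x,y]) = x \bullet D(y)$ is literally the condition for $D$ to be a homomorphism of $L$-modules from the adjoint module to $V$. Indeed, for the adjoint representation the action of $x$ on $y$ is $[x,y]$, so the defining identity of a module homomorphism reads $D(x \cdot y) = x \bullet D(y)$, which is exactly our equation.

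Once this identification is made, the conclusion follows from Schur's lemma. Since $L$ is simple, its $L$-submodules (under the adjoint action) are precisely its ideals, so the adjoint module is irreducible. The kernel of $D$ is an $L$-submodule of the adjoint module, hence either $0$ or all of $L$; since $D$ is nonzero, $\Ker D = 0$. Likewise, $\im D$ is an $L$-submodule of the irreducible module $V$, so $\im D = V$. Therefore $D$ is an isomorphism of $L$-modules, and $V$ is isomorphic to the adjoint module.

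There is essentially no obstacle here; the entire content of the lemma is the recognition that the given equation is the module homomorphism condition for the adjoint action. The only small point to check is the antisymmetry consistency, namely that $D([x,y]) = x \bullet D(y)$ together with $D([x,y]) = -D([y,x]) = -y \bullet D(x)$ gives $x \bullet D(y) + y \bullet D(x) = 0$; this is automatic and does not interfere with $D$ being a module map, since it simply reflects antisymmetry of the bracket on the domain side.
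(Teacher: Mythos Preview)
Your proof is correct and follows essentially the same approach as the paper: both recognize that the hypothesis makes $D$ an $L$-module homomorphism from the adjoint module to $V$, then use simplicity of $L$ (so $\Ker D$ is an ideal, hence zero) and irreducibility of $V$ (so $\im D = V$) to conclude that $D$ is an isomorphism. The extra remark about antisymmetry is harmless but unnecessary.
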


\begin{proof}
By definition, $D$ is a homomorphism of $L$-modules from the adjoint module $L$ 
to $V$. Obviously, $\Ker D$ is an ideal in $L$, thus $D$ is an injection. On the
other hand, $\im D$ is an $L$-submodule of $V$, thus $D$ is a surjection.
\end{proof}

\begin{lemma}\label{lemma-abelian}
Any $\delta$-derivation, where $\delta \ne 0$, of an abelian Lie algebra $H$
with values in a semisimple $H$-module $V$, is of the form
$x \mapsto \varphi(x) + x \bullet v$, where $x\in H$, for some linear map
$\varphi: H \to V^H$, and some $v\in V$.
\end{lemma}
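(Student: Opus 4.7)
The plan is to first exploit abelianness of $H$ to simplify the $\delta$-derivation identity, then split off the invariants $V^H$ using semisimplicity, and finally handle the remaining claim via a weight-space decomposition over the algebraic closure. Since $[x,y]=0$ for $x,y \in H$, identity (\ref{eq-eqdelta}) becomes $\delta(x \bullet D(y) - y \bullet D(x)) = 0$, which (using $\delta \ne 0$) is the symmetric identity
\begin{equation*}
x \bullet D(y) = y \bullet D(x), \qquad x,y \in H.
\end{equation*}
By semisimplicity, $V = V^H \oplus W$, where $W$ is the sum of those irreducible submodules on which $H$ acts non-trivially; writing $D = \varphi + D_1$ accordingly, the identity above holds automatically on the $V^H$-component (both sides vanishing) and restricts to the same identity on $D_1\colon H \to W$. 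Taking this $\varphi\colon H \to V^H$ as the first summand of the asserted formula, it remains to prove $D_1(x) = x \bullet v$ for some $v \in W$.

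For the reduced claim I would pass to the algebraic closure $\overline K$ and use a weight-space decomposition. Write $W \otimes_K \overline K = \bigoplus_{\lambda} W_\lambda$, with the sum taken over the weights of the semisimple abelian action; each weight $\lambda$ that occurs is non-zero, since $W$ has no $H$-invariants. Decomposing $D_1 = \sum_\lambda D_\lambda$, the symmetric identity, projected on the $W_\lambda$-summand, becomes $\lambda(x) D_\lambda(y) = \lambda(y) D_\lambda(x)$. Fixing any $x_0$ with $\lambda(x_0) \ne 0$ and setting $w_\lambda := D_\lambda(x_0)/\lambda(x_0) \in W_\lambda$ forces $D_\lambda(x) = \lambda(x) w_\lambda = x \bullet w_\lambda$. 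Summing, $\bar v := \sum_\lambda w_\lambda$ satisfies $D_1(x) = x \bullet \bar v$, giving the claim over $\overline K$.

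It remains to descend from $\overline K$ back to $K$. The subspace of $\Der_\delta(H,V)$ consisting of maps of the form $x \mapsto \varphi(x) + x \bullet v$ is $K$-linear, and, together with $\Der_\delta(H,V)$ itself, is cut out from finite-dimensional Hom-spaces by $K$-linear equations; hence both commute with the flat extension $-\otimes_K \overline K$. Since the inclusion of the former into the latter becomes an equality over $\overline K$ by the previous paragraph, it is already an equality over $K$ by counting dimensions. This descent step is the only real subtlety, since the weight-space argument is natural only over $\overline K$; an alternative would be to work directly with the canonical isotypic decomposition of $W$ over $K$, using that an irreducible non-trivial $H$-module is one-dimensional over the division ring $\mathrm{End}_{KH}(W_i) \subseteq \mathrm{End}_K(W_i)$, but the weight-space route is conceptually cleaner.
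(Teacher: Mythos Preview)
Your proof is correct and follows essentially the same route as the paper's: pass to the algebraic closure, decompose $V$ into $V^H$ plus weight spaces (the paper phrases this as one-dimensional nontrivial irreducible submodules and invokes Lemma~\ref{lemma-dirsum}), and use the key identity $\lambda(x)D_\lambda(y)=\lambda(y)D_\lambda(x)$ to force each weight component of $D$ to be $x\mapsto x\bullet w_\lambda$. Your explicit descent step is a welcome addition over the paper's bare ``we may assume $K$ is algebraically closed''.
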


\begin{proof}
By extending the ground field, we may assume that $K$ is algebraically closed. 
Let $D$ be such a $\delta$-derivation. Suppose first that $V$ is a
one-dimensional nontrivial $H$-module, linearly spanned by a single element $v$.
We may write $x \bullet v = \lambda(x)v$ and $D(x) = \mu(x) v$ for any $x\in H$
and some linear maps $\lambda, \mu: H \to K$. Then the condition 
(\ref{eq-eqdelta}) is equivalent to
\begin{equation}\label{eq-lm}
\lambda(x) \mu(y) = \lambda(y) \mu(x)
\end{equation}
for any $x,y\in H$. If $\Ker \lambda \ne \Ker \mu$, then taking $x \in H$ 
belonging to $\Ker \lambda$ and not belonging to $\Ker \mu$, we get from 
(\ref{eq-lm}) that $y \in \Ker\lambda$ for any $y \in H$, thus $V$ is the 
trivial module, a contradiction. Hence $\Ker \lambda = \Ker \mu$, and $\lambda$
and $\mu$ are proportional to each other, say, $\mu = \alpha \lambda$ for some 
$\alpha \in K$. Then $D(x) = x \bullet (\alpha v)$, and the assertion of lemma 
in this case follows.

In the general case $V$ can be represented as the direct sum of the trivial 
module $V^H$ and a number of one-dimensional nontrivial $H$-modules: say, 
$V = V^H \oplus \bigoplus_i Kv_i$. By Lemma \ref{lemma-dirsum},
$$
\Der_\delta(H,V) \simeq 
\Der_\delta(H,V^H) \oplus \bigoplus_i \Der_\delta(H,Kv_i) .
$$

The space $\Der_\delta(H,V^H)$, obviously, coincides with the space of all linear maps
$\varphi: H \to V^H$, and by the just proved one-dimensional case, we may assume
that any element of $\Der_\delta(H,Kv_i)$ is of the form 
$x \mapsto x \bullet v_i$. Then for any $x \in H$,
$$
D(x) = \varphi(x) + \sum_i (x \bullet v_i) = 
\varphi(x) + x \bullet (\sum_i v_i) ,
$$
and we are done.
\end{proof}

In what follows, for a finite-dimensional simple Lie algebra $\mathfrak g$
over an algebraically closed field $K$ of characteristic zero, we fix once and
for all a Cartan subalgebra $\mathfrak h$, and the corresponding root space 
decomposition 
$\mathfrak g = \mathfrak h \oplus \bigoplus_{\alpha \in R} Ke_\alpha$.

\begin{lemma}\label{lemma-u}
Let $D$ be a nonzero $\delta$-derivation of a finite-dimensional simple Lie
algebra $\mathfrak g$ over an algebraically closed field of characteristic zero,
with values in a finite-dimensional irreducible $\mathfrak g$-module $V$. Assume
that there is nonzero $v \in V$, and $\beta \in R$ such that 
$e_\alpha \bullet v = 0$ and $D(e_\alpha) = 0$ for any $\alpha \in R$ such that $\alpha \ne \beta$. Then
$\mathfrak g \simeq \mathfrak{sl}(2)$ and $\dim V \le 3$.
\end{lemma}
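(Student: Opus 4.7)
My plan is to first show $\mathfrak g \simeq \mathfrak{sl}(2)$ by deriving a contradiction from $\mathrm{rank}\, \mathfrak g \ge 2$, using the structure of irreducible root systems; a direct calculation in $\mathfrak{sl}(2)$ then gives the dimension bound.

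For the reduction step, $V$ splits into $\mathfrak h$-weight spaces and the conditions $e_\alpha \bullet v = 0$ respect this decomposition (each $e_\alpha$ shifts weights by $\alpha$), so I may replace $v$ by a nonzero weight component and assume $v$ is a weight vector of weight $\lambda \in \mathfrak h^*$. For every $\alpha \in R$ with $\alpha \ne \pm\beta$, both $e_\alpha \bullet v$ and $e_{-\alpha} \bullet v$ vanish, so $h_\alpha \bullet v = [e_\alpha, e_{-\alpha}] \bullet v = 0$ (up to a nonzero structure constant), yielding $\lambda(h_\alpha) = 0$. Now if $\mathrm{rank}\, \mathfrak g \ge 2$, irreducibility of $R$ implies $R \setminus \{\pm\beta\}$ spans $\mathfrak h^*$ (otherwise reflection in $\beta$ would force $R \setminus \{\pm\beta\}$ to be orthogonal to $\beta$, yielding a reducible orthogonal decomposition $R = \{\pm\beta\} \sqcup R'$), so the $h_\alpha$ for these $\alpha$ span $\mathfrak h$ and $\lambda = 0$.

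Next I consider $v' := e_\beta \bullet v$. If $v' = 0$, then $\mathfrak g \bullet v = 0$, hence $Kv$ is a trivial submodule of the irreducible $V$, forcing $V \simeq K$; but $\Der_\delta(\mathfrak g, K) = 0$ since $\mathfrak g$ is perfect, contradicting $D \ne 0$. Otherwise $v'$ has weight $\beta$, and expanding $e_\gamma \bullet v' = [e_\gamma, e_\beta] \bullet v + e_\beta \bullet (e_\gamma \bullet v)$ shows $e_\gamma \bullet v' = 0$ for every $\gamma \ne \beta$ (the case $\gamma = -\beta$ uses $h_\beta \bullet v = 0$, which follows from $\lambda = 0$). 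Applying the preceding weight argument to $v'$ then forces $\beta(h_\alpha) = 0$ for all $\alpha \ne \pm\beta$, which is impossible in an irreducible root system of rank at least $2$ by the same reflection argument. Hence $\mathfrak g \simeq \mathfrak{sl}(2)$.

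For the bound $\dim V \le 3$, I take the standard basis $e, f, h$ of $\mathfrak{sl}(2)$ with $\beta$ the positive root, so that $v$ is a lowest-weight vector of $V \simeq V(n)$ and $D(f) = 0$. Writing $D(e) = w$, the $\delta$-derivation equation on $[e,f] = h$ gives $D(h) = -\delta\, f \bullet w$; on $[h,f] = -2f$ it yields $f \bullet D(h) = 0$, so $f^2 \bullet w = 0$; and on $[h,e] = 2e$ it gives $2w = \delta^2\, ef \bullet w + \delta\, h \bullet w$. The first constraint confines $w$ to the span of the two lowest-weight basis vectors of $V(n)$, and the third equation then couples $\delta$ and $n$. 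The main obstacle is to convert this coupled system into the stated dimension bound, handled by case analysis on the admissible pairs $(\delta, n)$ for which a nonzero $w$ can satisfy all three equations simultaneously.
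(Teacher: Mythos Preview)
Your reduction to $\mathfrak{sl}(2)$ is correct and takes a genuinely different route from the paper. The paper first normalizes so that $v$ is a highest-weight vector with $\beta$ negative, observes via PBW that $V$ is then spanned by the single family $\{e_\beta^k \bullet v\}_{k \ge 0}$, and in rank $\ge 2$ writes $\beta$ as a sum $\alpha + \alpha'$ of two other roots, whence the $\delta$-derivation identity applied to $[e_\alpha,e_{\alpha'}]$ forces $D(e_\beta)=0$ and hence $D=0$. You instead stay on the weight side: from $e_{\pm\alpha} \bullet v = 0$ for $\alpha \ne \pm\beta$ you get $\lambda(h_\alpha)=0$, irreducibility of $R$ gives $\lambda = 0$, and repeating the argument with $v' = e_\beta \bullet v$ forces $\beta \perp \alpha$ for all $\alpha \ne \pm\beta$, again contradicting irreducibility. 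Your approach avoids PBW and the decomposition of $\beta$ as a sum of roots, at the price of a two-step iteration; both arguments are elementary once set up.

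The dimension bound, however, cannot be completed---not because your setup is wrong, but because the conclusion $\dim V \le 3$ is false as stated. Carrying your computation through with $w = a\,v_{n-1} + b\,v_n$, the constraint $f^2 \bullet w = 0$ is automatic and $2w = \delta^2\, ef\bullet w + \delta\, h\bullet w$ becomes $a(1-\delta)(2+\delta n)=0$ together with $b(2+\delta n)=0$. Taking $\delta = -\tfrac{2}{n}$ and $w = v_n$ solves the system for \emph{every} $n \ge 1$; the resulting map $(e_- \mapsto 0,\ h \mapsto 0,\ e_+ \mapsto v_n)$ is exactly the weight-$(-n-1)$ $(-\tfrac{2}{n})$-derivation of Lemma~\ref{lemma-2}(\ref{it-2n}), and together with $v = v_n$ it satisfies all hypotheses of the present lemma while $\dim V(n) = n+1$ is unbounded. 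So no case analysis on $(\delta,n)$ will rescue the bound. The paper's own proof has the matching gap: the assertion that ``since $\lambda$ is the highest weight, $e_\beta^k \bullet v = 0$ if $k \ge 3$'' is not justified and fails already for $\mathfrak{sl}(2)$ acting on $V(n)$ with $n \ge 3$. This does not affect the main theorem, since the lemma is only invoked later for its conclusion $\mathfrak g \simeq \mathfrak{sl}(2)$, which both arguments establish correctly.
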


\begin{proof}
Decomposing, if necessary, $v$ into the sum of nonzero elements belonging to 
weight spaces, we may assume that $v$ belongs to some weight space $V_\lambda$.
Replacing, if necessary, $\beta$ by $-\beta$ and $\lambda$ by $-\lambda$, we get
that $\lambda$ is the highest weight. Since $V$ is irreducible, it is generated,
as a module over the universal enveloping algebra $U(\mathfrak g)$, by a single
element $v$. By the Poincar\'e--Birkhoff--Witt theorem, each element of 
$U(\mathfrak g)$ is a sum of elements of the form 
\begin{equation*}
e_\beta^k h_1^{k_1} \dots h_n^{k_n} 
e_{\alpha_1}^{\ell_1} \dots e_{\alpha_m}^{\ell_m}
\end{equation*}
for some $h_1, \dots, h_n \in \mathfrak h$, $\alpha_1, \dots, \alpha_m \in R$, 
each $\alpha_i \ne \beta$, and some nonnegative integers 
$k,$ $k_1, \dots, k_n,$ $\ell_1, \dots, \ell_m$. If such an element really 
contains $e_\alpha$'s (that is, at least one of $\ell$'s is positive), then it 
acts on $v$ trivially, and since $h$'s act on $v$ by multiplying it by a scalar,
we get that $U(\mathfrak g)v$ is linearly spanned by elements of the form 
$e_\beta^k \bullet v$, $k=0,1,2,\dots$. But since $\lambda$ is the highest 
weight, $e_\beta^k \bullet v = 0$ if $k \ge 3$, thus $V$ is at most 
$3$-dimensional, linearly spanned by elements 
$v, e_\beta \bullet v, e_\beta^2 \bullet v$.

(At this point, the dimension considerations imply that 
$\dim \mathfrak g \le 9$ and hence $\mathfrak g$ is isomorphic either to 
$\mathfrak{sl}(2)$, or to $\mathfrak{sl}(3)$, but this obvious remark is 
superseded by the reasoning below).

Since $\lambda$ is the highest weight, we may assume $\beta = -\rho$, where 
$\rho$ is the highest root, and if rank of $\mathfrak g$ is $>1$, then 
$\beta = \alpha + \alpha^\prime$ for some $\alpha, \alpha^\prime \in R$. Then
the equation (\ref{eq-eqdelta}), written for $x = e_\alpha$, 
$y = e_{\alpha^\prime}$, yields $D(e_\beta) = 0$, a contradiction. Hence 
$\mathfrak g$ is of rank $1$, i.e. $\mathfrak g \simeq \mathfrak{sl}(2)$.
\end{proof}

\section{The case of $\mathfrak{sl}(2)$}

In this section we shall prove the main theorem in the case of 
$\mathfrak{sl}(2)$. Let the characteristic of the ground field be zero, 
$\{e_-, h, e_+\}$ be the standard basis of $\mathfrak{sl}(2)$ with 
multiplication table 
$$
[h,e_-] = -2e_-, \quad [h,e_+] = 2e_+, \quad [e_+,e_-] = h .
$$
The algebra $\mathfrak{sl}(2)$ is $\mathbb Z$-graded. We assign to elements of 
the standard basis the weights $1$, $0$, $-1$, respectively.

Let $V(n)$ denote the irreducible $(n+1)$-dimensional (i.e., of the highest 
weight $n$) $\mathfrak{sl}(2)$-module with the standard basis 
$\{v_0, v_1, \dots, v_n \}$. The action is given as follows:
\begin{align*}
e_- \bullet v_i &= (i+1) \> v_{i+1}             \\
h\phantom{_+} \bullet v_i  &= (n - 2i) \> v_i   \\
e_+ \bullet v_i &= (n - i + 1) \> v_{i-1}
\end{align*}
(Customarily, here and below we assume $v_i = 0$ if $i$ is out of range 
$0,\dots,n$). This is a graded module, with element $v_i$ having weight $i$. 
(Note that our assignment of weights in $\mathfrak{sl}(2)$ and $V(n)$ is not a 
standard one, but is slightly more convenient for keeping track of indices in 
computations below). Note also that $V(2) \simeq \mathfrak{sl}(2)$, the adjoint
module.

\begin{lemma}\label{lemma-2}
$\Der_\delta(\mathfrak{sl}(2), V(n)) = 0$ except for the following cases (in all
these cases, $n \ge 1$):
\begin{enumerate}[\upshape(i)]
\item\label{it-1}
$\delta = 1$; the space $\Der_1(\mathfrak{sl}(2),V(n))$ is $(n+1)$-dimensional, 
consisting of inner derivations.

\medskip

\item\label{it-2n}
$\delta = -\frac 2n$; the space $\Der_{-\frac 2n}(\mathfrak{sl}(2),V(n))$ is
$(n+3)$-dimensional, with a basis
{\rm \begin{center}
\begin{tabular}{p{70pt}p{70pt}p{70pt}p{70pt}p{70pt}}
$e_- \mapsto 0$        &
$e_- \mapsto 0$        &
$e_- \mapsto -v_{k+1}$ &
$e_- \mapsto -v_1$     &
$e_- \mapsto v_0$
\\
$h\phantom{_+} \mapsto 0$     &
$h\phantom{_+} \mapsto 2v_n$  &
$h\phantom{_+} \mapsto 2v_k$  &
$h\phantom{_+} \mapsto 2v_0$  &   
$h\phantom{_+} \mapsto 0$
\\
$e_+ \mapsto v_n$      &
$e_+ \mapsto v_{n-1}$  &
$e_+ \mapsto v_{k-1}$  &
$e_+ \mapsto 0$        &
$e_+ \mapsto 0$
\end{tabular}
\end{center}}
consisting of $(-\frac 2n)$-derivations of weight 
$-n-1$, $-n$, $-k$, $0$, $1$, respectively, where $1 \le k \le n-1$.

\medskip

\item\label{it-2n2}
$\delta = \frac{2}{n+2}$ and $n \ge 2$; the space 
$\Der_{\frac{2}{n+2}}(\mathfrak{sl}(2),V(n))$ is $(n-1)$-dimensional, with a 
basis
{\rm \begin{center}
\begin{tabular}{p{150pt}}
$e_- \mapsto k(k+1) \, v_{k+1}$
\\
$h\phantom{_+} \mapsto 2k(n-k) \, v_k$
\\
$e_+ \mapsto -(n-k)(n-k+1) \, v_{k-1}$
\end{tabular}
\end{center}}
consisting of $\frac{2}{n+2}$-derivations of weight $-k$, where 
$1 \le k \le n-1$.

\end{enumerate}
\end{lemma}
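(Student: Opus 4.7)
The plan is to apply Lemma \ref{lemma-grad} to the $\mathbb Z$-grading described just before the statement. This reduces the classification to weight-homogeneous $\delta$-derivations: a $\delta$-derivation $D$ of weight $\alpha$ is completely determined by three scalars $a,b,c$ via
\begin{equation*}
D(e_-) = a\, v_{1-\alpha}, \qquad D(h) = b\, v_{-\alpha}, \qquad D(e_+) = c\, v_{-1-\alpha},
\end{equation*}
subject to the convention $v_i = 0$ for $i \notin \{0,\dots,n\}$. Consequently, nontrivial weight-homogeneous derivations can occur only for $-n-1 \le \alpha \le 1$, leaving at most $n+3$ relevant weights to inspect.

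For each such $\alpha$, I substitute the ansatz into the defining equation (\ref{eq-eqdelta}) applied to the three basic brackets $[h,e_-]=-2e_-$, $[h,e_+]=2e_+$, $[e_+,e_-]=h$, and expand using the explicit $\mathfrak{sl}(2)$-action formulas on $V(n)$. This yields a homogeneous $3\times 3$ linear system in $(a,b,c)$ whose coefficients are polynomial in $n,\delta,\alpha$. I expect the resulting determinant, on the interior range $-n+1 \le \alpha \le -1$, to factor (up to a nonzero scalar) as the product of $(1-\delta)$, $(n\delta+2)$, and $((n+2)\delta-2)$. This factorization is the heart of the lemma: nontrivial solutions exist precisely for the three exceptional values $\delta=1$, $\delta=-2/n$, $\delta=2/(n+2)$. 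For each exceptional $\delta$ and each admissible $\alpha$, back-substitution recovers $(a,b,c)$ up to a scalar and hence a one-dimensional space $\Der_\delta^\alpha(\mathfrak{sl}(2),V(n))$.

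Summing over admissible weights yields the stated dimensions. For $\delta=1$ the admissible weights turn out to be $\{-n,\dots,0\}$, and the corresponding derivations are directly identified with the inner derivations $x \mapsto x \bullet v_j$ by inspection; this gives $n+1$ dimensions as in (\ref{it-1}). For $\delta=-2/n$ all $n+3$ weights $\{-n-1,\dots,1\}$ contribute, producing the five families of basis elements displayed in (\ref{it-2n}) (three interior families parametrized by $k$, plus the two end-columns with $\alpha = 1$ and $\alpha = -n-1$). For $\delta=2/(n+2)$ only the $n-1$ interior weights $\{-n+1,\dots,-1\}$ contribute, matching (\ref{it-2n2}); for $n=1$ this set is empty, which explains the bound $n \ge 2$.

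The main obstacle is not the algebra but the bookkeeping at the four boundary weights $\alpha \in \{1, 0, -n, -n-1\}$, where one or two of $a,b,c$ are automatically zero and some of the three equations become tautological. In these degenerate cases one must rederive the single remaining scalar condition on $\delta$ from first principles, and then verify that the exceptional $\delta$ emerging from each boundary equation is consistent with the interior factorization: exactly $\delta = -2/n$ at all four boundary weights, with the extra miracle that at $\alpha = -n$ and $\alpha = 0$ the value $\delta = 1$ also reappears, supplying the two extreme inner derivations $x \mapsto x \bullet v_0$ and $x \mapsto x \bullet v_n$. Once this case analysis is tabulated, the explicit bases displayed in (\ref{it-2n}) and (\ref{it-2n2}) are read off directly from the solved systems.
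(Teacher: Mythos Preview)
Your proposal is correct and follows essentially the same route as the paper: reduce via Lemma~\ref{lemma-grad} to weight-$\alpha$ derivations determined by three scalars, write the three linear conditions coming from the brackets $[h,e_\pm]$ and $[e_+,e_-]$, observe that on the interior range $-n+1\le\alpha\le-1$ the determinant is a cubic in $\delta$ with roots $1$, $-\tfrac{2}{n}$, $\tfrac{2}{n+2}$ (independent of $\alpha$), and handle the four boundary weights $\alpha\in\{-n-1,-n,0,1\}$ separately. One small slip in your parenthetical description: the five columns of the table in (\ref{it-2n}) correspond to the four boundary weights plus \emph{one} interior family parametrized by $k$, not ``three interior families parametrized by $k$''; this does not affect your argument.
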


As $V(2) \simeq \mathfrak{sl}(2)$, for $n=2$ we are dealing with just 
$\delta$-derivations of $\mathfrak{sl}(2)$, and this particular case of 
Lemma~\ref{lemma-2} was known. Indeed, it follows from the results of 
\cite{filippov-98} and \cite{filippov} mentioned in the introduction, that 
$\mathfrak{sl}(2)$ has nonzero $\delta$-derivations only in the cases 
$\delta = 1$ (the ordinary inner derivations), $\delta = \frac 12$ (scalar 
multiples of the identity map, a particular case of (\ref{it-2n2})), and 
$\delta = -1$; and the $5$-dimensional space 
$\Der_{-1}(\mathfrak{sl}(2), \mathfrak{sl}(2))$, a particular case of 
(\ref{it-2n}), was described in \cite[Example 1.5]{hopkins} and 
\cite[Example in \S 3]{filippov-5}.

\begin{proof}
As we are dealing with a $\mathbb Z$-graded module over a $\mathbb Z$-graded Lie
algebra, by Lemma \ref{lemma-grad} it is sufficient to consider 
$\delta$-derivations of a fixed weight $\alpha \in \mathbb Z$. So, let $D$ be
a nonzero map lying in $\Der_\delta^\alpha (\mathfrak{sl}(2), V(n))$. Note that 
$\delta \ne 0$ and $n>0$. Also, the case $\delta = 1$ corresponds to the usual
derivations (= $1$-cocycles), case (\ref{it-1}), so when encountered in the 
computations below, it can be readily discarded.

We have
\begin{alignat*}{3}
&D(e_-) &\>=\>& \lambda v_{1-\alpha} \\
&D(h)   &\>=\>& \mu v_{-\alpha}         \\
&D(e_+) &\>=\>& \eta v_{-1 - \alpha}
\end{alignat*}
for some $\lambda, \mu, \eta \in K$. To ensure that at least one of the indices
is in the range $0,\dots,n$, we have $-n-1 \le \alpha \le 1$. Writing the 
equation (\ref{eq-eqdelta}) for all possible $3$ pairs of the basis elements 
$(h,e_-)$, $(e_+,e_-)$, and $(h,e_+)$, we get respectively:
\begin{gather}
\Big(-2\lambda + \delta \mu (1 - \alpha) - \delta\lambda (n - 2 + 2\alpha)\Big) 
v_{1-\alpha} = 0 \notag
\\
\Big(\mu - \delta\eta\alpha - \delta\lambda (n + \alpha)\Big) v_{-\alpha} = 0
\label{eq-g}
\\
\Big(2\eta + \delta\mu (n+\alpha+1) - \delta\eta (n+2+2\alpha)\Big) 
v_{-1-\alpha} = 0 \notag
\end{gather}

\emph{Case 1}. $\alpha = -n-1$. The first and the second equations in 
(\ref{eq-g}) give nothing, and the third one is equivalent to 
$\eta(2 + n\delta) = 0$. Since $D$ is not zero, we may normalize it by assuming
$\eta = 1$, what gives $\delta = -\frac{2}{n}$. This is the first 
$(-\frac 2n)$-derivation in case (\ref{it-2n}).

\emph{Case 2}. $\alpha = -n$. The first equation in (\ref{eq-g}) gives nothing,
and the second and the third one are equivalent to:
\begin{equation}\label{eq-q1}
\mu + n\delta\eta = 0
\end{equation}
and
\begin{equation}\label{eq-q2}
2\eta + \delta\mu + (n-2)\delta\eta = 0 ,
\end{equation}
respectively. If $\eta = 0$, then $\mu = 0$, a contradiction. Hence we may 
normalize $D$ by assuming $\eta = 1$, and resolving the quadratic equation in 
$\delta$ occurring from equations (\ref{eq-q1})--(\ref{eq-q2}), we get that 
either $\delta = 1$, or $\delta = -\frac 2n$ and $\mu = 2$, and we are getting 
the second $(-\frac 2n)$-derivation in case (\ref{it-2n}).

\emph{Case 3}. $-n+1 \le \alpha \le -1$. Note that this implies $n\ge 2$. All 
the indices of $v$'s occurring in (\ref{eq-g}) are within the allowed range, 
thus all the coefficients of $v$'s vanish. Consider these vanishing conditions 
as a system of $3$ homogeneous linear equations in $\lambda$, $\mu$, $\eta$. The determinant of this system is a 
cubic equation in $\delta$, whose roots are not dependent on $\alpha$, and are
equal to $-\frac 2n$, $\frac{2}{n+2}$, and $1$.

\emph{Case 3.1}. $\delta = -\frac 2n$. The space of solutions of the homogeneous
system is $1$-dimensional, linearly spanned by 
$(\lambda = -1, \mu=2, \eta = 1)$. This is the family of 
$(-\frac 2n)$-derivations in case (\ref{it-2n}), with $k=-\alpha$.

\emph{Case 3.2}. $\delta = \frac{2}{n+2}$. The space of solutions of the
homogeneous system is $1$-dimensional, linearly spanned by
$$
\lambda = \alpha(\alpha - 1),\>  \mu = -2\alpha(n+\alpha),\> 
\eta = -(n+\alpha)(n+\alpha+1) .
$$
These are exactly $\frac{2}{n+2}$-derivations in case (\ref{it-2n2}), with 
$k = -\alpha$.

\emph{Case 4}. $\alpha = 0$. The third equation in (\ref{eq-g}) gives nothing,
and the first and the second are equivalent to
\begin{equation}\label{eq-yoyo}
-2\lambda + \delta \mu - (n-2)\delta\lambda = 0
\end{equation}
and
\begin{equation}\label{eq-yoyo1}
\mu - n\delta\lambda = 0 ,
\end{equation}
respectively. If $\lambda = 0$, then (\ref{eq-yoyo1}) implies $\mu = 0$, a
contradiction. Hence we may normalize $D$ by assuming $\lambda = -1$, and then
the quadratic equation in $\delta$ occurring from 
(\ref{eq-yoyo})--(\ref{eq-yoyo1}) gives that either $\delta = 1$, or 
$\delta = - \frac 2n$ and $\mu = 2$. In the last case we get the fourth
$(-\frac 2n)$-derivation in case (\ref{it-2n}).

\emph{Case 5}. $\alpha = 1$. The second and the third equations in (\ref{eq-g})
give nothing, and the first equation is equivalent to 
$\lambda (2 + n\delta) = 0$. Since $D$ is nonzero, we may normalize it by 
assuming $\lambda = 1$, and then $\delta = -\frac 2n$, what gives the last, 
fifth, $(-\frac 2n)$-derivation in case (\ref{it-2n}).
\end{proof}

\section{The case of $\mathfrak g \not\simeq \mathfrak{sl}(2)$}

In the previous section we proved the main theorem in the case of 
$\mathfrak{sl}(2)$ and an irreducible $\mathfrak{sl}(2)$-module. Now we are 
ready to handle the case of any other simple Lie algebra $\mathfrak g$ and an irreducible 
$\mathfrak g$-module.

\begin{lemma}\label{lemma-simple}
Let $D$ be a nonzero $\delta$-derivation, $\delta \ne 1$, of a simple 
finite-dimensional Lie algebra $\mathfrak g$ over an algebraically closed field
of characteristic zero, $\mathfrak g \not\simeq \mathfrak{sl}(2)$, with values 
in a finite-dimensional irreducible $\mathfrak g$-module $V$. Then 
$\delta = \frac 12$, $V$ is isomorphic to the adjoint module, and $D$ is a 
multiple of the identity map.
\end{lemma}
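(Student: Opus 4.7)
My plan is to use Lemma \ref{lemma-grad} to reduce to a nonzero $\delta$-derivation $D$ of a fixed weight $\mu \in \mathfrak{h}^*$, apply Lemma \ref{lemma-abelian} to $D|_{\mathfrak{h}}$ to write $D(h) = h \bullet u + \varphi(h)$ with $\varphi(h) \in V^{\mathfrak{h}} = V_0$, and then split into cases based on whether $\mu = 0$. Weight considerations force $\varphi = 0$ and $u \in V_{-\mu}$ when $\mu \ne 0$, while $D(h) = \varphi(h) \in V_0$ when $\mu = 0$. Substituting the pair $(h, e_\alpha)$ into (\ref{eq-eqdelta}) and using $D(e_\alpha) \in V_{\alpha - \mu}$ yields the workhorse identity
\[
\bigl((1-\delta)\alpha(h) + \delta\mu(h)\bigr) D(e_\alpha) = -\delta\, e_\alpha \bullet D(h)
\]
for all $h \in \mathfrak{h}$, which drives the rest of the argument.

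For $\mu \ne 0$, the identity (rewritten as $((1-\delta)\alpha + \delta\mu)(h) D(e_\alpha) = \delta \mu(h)\, e_\alpha \bullet u$) forces $D(e_\alpha) = 0$ whenever the root $\alpha$ is not proportional to $\mu$. Since $\mathfrak{g} \not\simeq \mathfrak{sl}(2)$ has rank $\ge 2$ with irreducible root system, every root $\alpha_0$ can be written as $\alpha_0 = \beta + \gamma$ for roots $\beta, \gamma$ not proportional to $\alpha_0$; applying (\ref{eq-eqdelta}) to $(e_\beta, e_\gamma)$ then kills any remaining $D(e_{\pm\alpha_0})$. Evaluating (\ref{eq-eqdelta}) on $(e_{\alpha_0}, e_{-\alpha_0})$ now gives $h_{\alpha_0} \bullet u = 0$ for every root $\alpha_0$; since coroots span $\mathfrak{h}$ and $\mu \ne 0$, this forces $u = 0$, hence $D = 0$, contradicting $D \ne 0$.

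For $\mu = 0$, the workhorse identity simplifies to $e_\alpha \bullet \varphi(h) = \tfrac{\delta - 1}{\delta}\, \alpha(h) D(e_\alpha)$. The crucial step is to apply this with $\gamma = \alpha + \beta$ for two linearly independent roots $\alpha, \beta$ with $\alpha + \beta \in R$, re-expressing $e_{\alpha+\beta} \bullet \varphi(h) = N_{\alpha,\beta}^{-1} [e_\alpha, e_\beta] \bullet \varphi(h)$ via the identity applied to $\alpha$ and $\beta$. Separating the coefficients of $\alpha(h)$ and $\beta(h)$ --- possible by linear independence, which uses rank $\ge 2$ --- and combining with (\ref{eq-eqdelta}) for $(e_\alpha, e_\beta)$ yields two linear relations in $A = e_\alpha \bullet D(e_\beta)$ and $B = e_\beta \bullet D(e_\alpha)$, whose solution forces $B = -A$ and $(2\delta - 1) A = 0$. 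Either $\delta = \tfrac{1}{2}$, or $A = B = 0$ for all such pairs, which propagates via (\ref{eq-eqdelta}) to $D(e_\gamma) = 0$ for every $\gamma \in R$ (again using rank $\ge 2$), then forces $\varphi = 0$ via the workhorse identity and irreducibility, and hence $D = 0$. Thus $\delta = \tfrac{1}{2}$; the same computation then also yields $A = N_{\alpha,\beta} D(e_{\alpha + \beta})$, and analogous (easier) arguments handle the pairs $(h, e_\alpha)$, $(e_\alpha, e_{-\alpha})$, and $(e_\alpha, e_\beta)$ with $\alpha + \beta \notin R \cup \{0\}$ --- altogether showing that $D \colon \mathfrak{g} \to V$ is a $\mathfrak{g}$-module homomorphism from the adjoint module. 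Schur's lemma then forces $V \simeq \mathfrak{g}$ and $D$ to be a scalar multiple of the identity. The main obstacle I anticipate is the coupled derivation of $\delta = \tfrac{1}{2}$ and of the module-homomorphism property in the $\mu = 0$ case; the rank $\ge 2$ hypothesis, via linear independence of root pairs, is decisive.
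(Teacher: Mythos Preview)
Your proposal is correct and follows the same overall strategy as the paper's proof: reduce to a fixed weight via Lemma~\ref{lemma-grad}, apply Lemma~\ref{lemma-abelian} on $\mathfrak h$, derive the key identity from the pair $(h,e_\alpha)$, and split according to whether the weight is zero, concluding with Lemma~\ref{lemma-cent} in the $\delta=\tfrac12$ case. The only notable difference is that where the paper invokes Lemma~\ref{lemma-u} (handling the nonzero-weight cases $\mu\in R$ and $\mu\notin R$ separately, and using the same reasoning again in the zero-weight $\delta\ne\tfrac12$ subcase), you instead use the fact that in an irreducible root system of rank $\ge 2$ every root is a sum of two roots not proportional to it --- this unifies the paper's Cases~2 and~3 and slightly streamlines the argument, at the cost of that one extra root-system fact.
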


\begin{proof}
If $V$ is trivial, then the condition (\ref{eq-eqdelta}) implies $D([x,y]) = 0$
for any $x,y\in \mathfrak g$, thus $D=0$, a contradiction; so we may assume that
$V$ is nontrivial, i.e. $\dim V \ge 2$.

The Cartan subalgebra $\mathfrak h$ acts on $V$ semisimply, with the 
corresponding weight space decomposition 
$V = \bigoplus_{\beta \in \Phi} V_\beta$. By Lemma \ref{lemma-abelian}, we can 
write $D(h) = \varphi(h) + h \bullet v$ for any $h\in \mathfrak h$, some linear
map $\varphi: \mathfrak h \to V^{\mathfrak h} = V_0$, and some $v \in V$. 
Writing the equality (\ref{eq-eqdelta}) for $x = h \in \mathfrak h$ and 
$y = e_\alpha$, $\alpha \in R$, we get
\begin{equation}\label{eq-he}
\alpha(h)D(e_\alpha) = -\delta e_\alpha \bullet (\varphi(h) + h \bullet v) 
+ \delta h \bullet D(e_\alpha) .
\end{equation}

By Lemma \ref{lemma-grad}, it is enough to consider $\delta$-derivations of some
weight $\gamma \in \langle R \rangle$, where $\langle R \rangle$ is the abelian
group generated by $R$ (note that $\Phi \subset \langle R \rangle$). We consider
several cases depending on the weight of $D$.

\emph{Case 1}. $D$ is of zero weight. In this case 
$D(\mathfrak h) \subseteq V_0$ and $D(e_\alpha) \in V_\alpha$ for any 
$\alpha \in R$ (here and below we assume $V_\lambda = 0$ if 
$\lambda \notin \Phi$). Then $v \in V_0$, thus $h \bullet v = 0$, $D = \varphi$,
$h \bullet D(e_\alpha) = \alpha(h) D(e_\alpha)$, and the equality (\ref{eq-he})
reduces to
\begin{equation}\label{eq-he1}
\alpha(h)D(e_\alpha) = \frac{\delta}{\delta - 1} e_\alpha \bullet D(h) .
\end{equation}

Writing the equality (\ref{eq-eqdelta}) for $x = e_\alpha$ and $y = e_\beta$,
where $\alpha, \beta \in R$, $\alpha + \beta \ne 0$, and taking into account
(\ref{eq-he1}) and the fact that $[e_\alpha,e_\beta] \in Ke_{\alpha + \beta}$, 
we get
$$
- \big((\delta - 1)\beta(h)  + \delta\alpha(h)\big) 
e_\alpha \bullet D(e_\beta)
+
  \big((\delta - 1)\alpha(h) + \delta\beta(h)\big) 
e_\beta \bullet D(e_\alpha)
= 0
$$
for any $h \in \mathfrak h$. Assuming the rank of $\mathfrak g$ is $>1$, 
$\alpha \ne \beta$, and picking $h$ such that $\alpha(h) \ne 0$ and 
$\beta(h) = 0$, the last equality is reduced to
\begin{equation}\label{eq-ab}
\delta e_\alpha \bullet D(e_\beta) + (1 - \delta) e_\beta \bullet D(e_\alpha)
= 0 .
\end{equation}

Assuming $\delta \ne \frac 12$, and interchanging here $\alpha$ and $\beta$, we
get $e_\beta \bullet D(e_\alpha) = 0$ for any $\alpha, \beta \in R$ such that
$\alpha + \beta \ne 0$. Now reasoning as in the first half of the proof of 
Lemma \ref{lemma-u}, we get that $\dim V = 3$, and $V$ is spanned by elements
of weight $\alpha, 0, -\alpha$. Since the rank of $\mathfrak g$ is $>1$, we can
pick two linearly independent roots $\alpha$ and $\beta$, thus getting different
sets of weights of elements of $V$, a contradiction.

We are left with the case $\delta = \frac 12$. In this case, (\ref{eq-he1}) can
be rewritten as
\begin{equation}\label{eq-he2}
h \bullet D(e_\alpha) + e_\alpha \bullet D(h) = 0
\end{equation}
for any $h \in \mathfrak h$ and $\alpha \in R$, and (\ref{eq-ab}) is equivalent
to
\begin{equation}\label{eq-ab2}
e_\alpha \bullet D(e_\beta) + e_\beta \bullet D(e_\alpha) = 0
\end{equation}
for any $\alpha, \beta \in R$, $\alpha + \beta \ne 0$. Moreover, since 
$[e_\alpha, e_{-\alpha}] \in \mathfrak h$ acts on $D(h)$ trivially for any 
$h \in \mathfrak h$, we have 
$$
  e_\alpha    \bullet (e_{-\alpha} \bullet D(h)) 
- e_{-\alpha} \bullet (e_\alpha    \bullet D(h)) = 0 ,
$$
what, taking into account (\ref{eq-he1}) and assuming $\alpha(h) \ne 0$, yields
\begin{equation}\label{eq-am2}
e_\alpha \bullet D(e_{-\alpha}) + e_{-\alpha} \bullet D(e_{\alpha}) = 0
\end{equation}
for any $\alpha \in R$.

The equalities (\ref{eq-he2}), (\ref{eq-ab2}), and (\ref{eq-am2}) show that for
any $x,y \in \mathfrak g$, $x \bullet D(y) + y \bullet D(x) = 0$, and hence
$D([x,y]) = x \bullet D(y)$. By Lemma \ref{lemma-cent}, $V$ is isomorphic to the
adjoint module, and thus $D$ is an element of the centroid of $\mathfrak g$. But
since $\mathfrak g$ is simple (and hence central), $D$ is a multiple of the identity map.

\emph{Case 2}. $D$ is of nonzero weight $\gamma \notin R$. In this case 
$D(\mathfrak h) = 0$ and $D(e_\alpha) \in V_{\alpha - \gamma}$ for any 
$\alpha \in R$, and (\ref{eq-he}) reduces to
$$
\alpha(h)D(e_\alpha) = \delta h \bullet D(e_\alpha) .
$$

This means that for any $\alpha\in R$, either $D(e_\alpha) = 0$, or 
$D(e_\alpha) \in V_{\frac{1}{\delta}\alpha}$. The latter condition implies 
$\alpha - \gamma = \frac{1}{\delta} \alpha$, and
$\alpha = \frac{\delta}{\delta - 1} \gamma$. Writing the equality 
(\ref{eq-eqdelta}) for $x = e_\alpha$, 
$\alpha \ne \frac{\delta}{\delta - 1} \gamma$, and 
$y = e_{\frac{\delta}{\delta - 1} \gamma}$, we get
$$
e_\alpha \bullet D(e_{\frac{\delta}{\delta - 1} \gamma}) = 0 .
$$

Note that $D(e_{\frac{\delta}{\delta - 1} \gamma}) \ne 0$, otherwise $D$ is the
zero map. Therefore the conditions of Lemma \ref{lemma-u} are satisfied, with
$v = D(e_{\frac{\delta}{\delta - 1} \gamma})$ and 
$\beta = \frac{\delta}{\delta - 1} \gamma$, and hence 
$\mathfrak g \simeq \mathfrak{sl}(2)$, a contradiction.

\emph{Case 3}. $D$ is of nonzero weight $\beta \in R$. In this case 
$D(\mathfrak h) \subseteq V_{-\beta}$ and $D(e_\alpha) \in V_{\alpha - \beta}$
for any $\alpha \in R$. Then $\varphi = 0$ and $v \in V_{-\beta}$, thus 
$D(h) = h \bullet v = -\beta(h)v$, and the equality
(\ref{eq-he}) reduces to
\begin{equation}\label{eq-alpha}
\alpha(h)D(e_\alpha) = 
\delta \beta(h) e_\alpha \bullet v + \delta h \bullet D(e_\alpha)
\end{equation}
for any $\alpha \in R$. Assume here $\alpha - \beta \in \Phi$ and 
$\alpha \ne \beta$. Since $D(e_\alpha) \in V_{\alpha - \beta}$, we have 
$$
h \bullet D(e_\alpha) = (\alpha(h) - \beta(h))D(e_\alpha) ,
$$
and the equality (\ref{eq-alpha}) in this case is equivalent to
\begin{equation*}
\Big((1 - \delta)\alpha(h) + \delta \beta(h)\Big) D(e_\alpha) = 
\delta \beta(h) e_\alpha \bullet v .
\end{equation*}

Picking $h$ such that $\beta(h) = 0$ and $\alpha(h) \ne 0$, we get 
$D(e_\alpha) = 0$. Since the latter equality is true also for any $\alpha \in R$
such that $\alpha - \beta \notin \Phi$, we have $D(e_\alpha) = 0$ for any 
$\alpha \in R$, $\alpha \ne \beta$. It follows then from (\ref{eq-alpha}) that 
$e_\alpha \bullet v = 0$ for any $\alpha \ne \beta$. Therefore, again, we are
in the conditions of Lemma \ref{lemma-u}, thus
$\mathfrak g \simeq \mathfrak{sl}(2)$, a contradiction.
\end{proof}

\begin{remark}
A slight shortcut in part of the proof of Lemma \ref{lemma-simple} can be 
achieved by invoking \cite[Lemma 4.4]{delta} which implies that if a simple Lie
algebra $L$ has a nontrivial $\delta$-derivation with values in an $L$-module, 
then either $L$ satisfies the standard identity of degree $5$, or $\delta = 1$ 
or $\frac 12$. But \cite[Lemma 4.4]{delta} is just a slight upgrade of 
\cite[Theorem 1]{filippov} which involves sophisticated manipulations with 
identities, and, in its turn, is based on highly nontrivial results of Razmyslov
about identities in simple Lie algebras. And still, we will have to handle the 
$\delta = \frac 12$ part in Case 1 of the proof, and Cases 2 and 3 entirely. 
Thus we prefer a more direct approach, based on relatively trivial computations
with root systems in simple Lie algebras.

Another possibility would be to consider subalgebras of $\mathfrak g$ isomorphic
to $\mathfrak{sl}(2)$, and invoke Lemma \ref{lemma-2}, what would restrict our
considerations to the exceptional cases of $\delta$ described there, which still
had to be handled separately, more or less along the lines of the given proof.
\end{remark}

\section{Completion of the proof of the main theorem}

Lemmas \ref{lemma-2} and \ref{lemma-simple} together establish the main theorem
stated in the introduction, in the case of a simple $\mathfrak g$ and an 
irreducible $\mathfrak g$-module. Now, basing of this, we complete the proof for
the general case of a semisimple $\mathfrak g$ and arbitrary $\mathfrak g$-module.

If $\delta = 1$, the statement reduces to the ordinary derivations, and, as 
noted above, is equivalent to the first Whitehead Lemma about triviality of the
first Chevalley--Eilenberg cohomology $\Homol^1(\mathfrak g, V)$. So we may 
assume $\delta \ne 1$. Also, obviously, $\delta \ne 0$.

Let $\mathfrak g = \mathfrak g_1 \oplus \dots \oplus \mathfrak g_m$ be the 
direct sum decomposition into simple ideals $\mathfrak g_i$. Assume first that 
$V$ is irreducible. Then $V \simeq V_1 \otimes \dots \otimes V_m$, where $V_i$ 
is an irreducible $\mathfrak g_i$-module. By Lemma \ref{lemma-4}, exactly one of
the modules $V_1, \dots, V_m$, say, $V_k$, is different from the trivial module,
and the rest $m-1$ modules are trivial. Therefore $V \simeq V_k$ is an 
irreducible nontrivial $\mathfrak g_k$-module, and $\mathfrak g_i$ acts on $V$ 
trivially if $i \ne k$. The restriction of any $\delta$-derivation 
$D: \mathfrak g \to V$ on $\mathfrak g_k$ is a $\delta$-derivation of 
$\mathfrak g_k$ with values in $V_k$, $D$ acts trivially on the rest of 
$\mathfrak g_i$, $i \ne k$, and 
$\Der_\delta(\mathfrak g, V) \simeq \Der_\delta(\mathfrak g_k, V_k)$. By 
Lemmas \ref{lemma-2} and \ref{lemma-simple}, one of the following holds:
\begin{enumerate}[\upshape(i)]

\item
$\delta = -\frac 2n$, $n \ge 1$, $\mathfrak g_k \simeq \mathfrak{sl}(2)$, 
$V_k \simeq V(n)$, and $\Der_\delta(\mathfrak g_k, V_k)$ is as in 
Lemma~\ref{lemma-2}(\ref{it-2n});

\item\label{it-2n21}
$\delta = \frac{2}{n+2}$, $n\ge 2$, $\mathfrak g_k \simeq \mathfrak{sl}(2)$, 
$V_k \simeq V(n)$, and $\Der_\delta(\mathfrak g_k, V_k)$ is as in 
Lemma~\ref{lemma-2}(\ref{it-2n2});

\item\label{it-id}
$\delta = \frac 12$, $V_k \simeq \mathfrak g_k$ (the adjoint module), and 
$\Der_\delta(\mathfrak g_k, V_k) \simeq K\id_{\mathfrak g_k}$.

\end{enumerate}

Note that the cases (\ref{it-2n21}) and (\ref{it-id}) overlap if $n=2$ and 
$\mathfrak g_k \simeq \mathfrak{sl}(2)$.

The proof is finished by the remark that in the general case $V$ is decomposed
into the direct sum of irreducible $\mathfrak g$-modules, and application of 
Lemma \ref{lemma-dirsum}.

\section{
Open questions: positive characteristic and infinite-dimensional modules
}

What happens in positive characteristic? As it is commonly known, the general
situation in this case is much more complicated. The first Whitehead Lemma does
not hold (in a sense, the opposite is true: as proved in 
\cite[Chapter VI, \S 3, Theorem 2]{jacobson}, any finite-dimensional Lie algebra
over a field of positive characteristic has a finite-dimensional module with 
first nonzero cohomology), so there is no point to conjecture that (most of) 
$\delta$-derivations should be inner derivations. However, this does not 
preclude the possibility that ``most'', in some sense, $\delta$-derivations can
be ordinary derivations.

\begin{conjecture}
A nonzero $\delta$-derivation of a finite-dimensional semisimple Lie algebra $L$
over a field of characteristic $\ne 2,3$ with values in a finite-dimensional 
$L$-module is, excluding the small number of exceptional cases, either an 
ordinary derivation ($\delta = 1$), or is a multiple of the identity map on a 
simple component of $L$ with coefficients in the adjoint module 
($\delta = \frac 12$).
\end{conjecture}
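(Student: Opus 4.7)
The plan is to reduce the general classification to the two simple cases already resolved by Lemmas~\ref{lemma-2} and~\ref{lemma-simple}, namely $\mathfrak g$ simple and $V$ irreducible, and then reassemble the answer from these building blocks via Lemma~\ref{lemma-dirsum}. Two edge values of $\delta$ are handled separately. For $\delta = 0$, the defining equation~(\ref{eq-eqdelta}) forces $D([\mathfrak g, \mathfrak g]) = 0$, and since a semisimple $\mathfrak g$ is perfect, $D = 0$. For $\delta = 1$, a $1$-derivation is an ordinary derivation, and the classical first Whitehead Lemma $\Homol^1(\mathfrak g, V) = 0$ gives that every such derivation is inner, of the form $x \mapsto x \bullet v$ for some $v \in V$, which is case~(i) of the statement.

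For $\delta \ne 0, 1$, I would first apply Lemma~\ref{lemma-dirsum} to the decomposition of $V$ into irreducible $\mathfrak g$-submodules (available via Weyl's complete reducibility), reducing to the case that $V$ is irreducible. Writing $\mathfrak g = \mathfrak g_1 \oplus \dots \oplus \mathfrak g_m$ as a direct sum of simple ideals (each of them perfect), the standard fact that irreducible modules over a direct sum of simple algebras are outer tensor products gives $V \simeq V_1 \otimes \dots \otimes V_m$ with each $V_i$ irreducible over $\mathfrak g_i$. Lemma~\ref{lemma-4} then shows that $\Der_\delta(\mathfrak g, V)$ vanishes unless exactly one $V_k$ is nontrivial, in which case $\Der_\delta(\mathfrak g, V) \simeq \Der_\delta(\mathfrak g_k, V_k)$. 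Now Lemma~\ref{lemma-2} classifies this space when $\mathfrak g_k \simeq \mathfrak{sl}(2)$, producing the three exceptional families $\delta \in \{-\tfrac 2n, \tfrac{2}{n+2}, \tfrac 12\}$ with their explicit bases, while Lemma~\ref{lemma-simple} classifies it when $\mathfrak g_k \not\simeq \mathfrak{sl}(2)$, forcing $\delta = \tfrac 12$, $V_k \simeq \mathfrak g_k$ the adjoint module, and $D$ a scalar multiple of the identity. Reassembling via Lemma~\ref{lemma-dirsum} over the decomposition $V = \bigoplus_{i,j} V_i^j$ into irreducible $\mathfrak g$-summands then reproduces the three indexed direct sums appearing in the statement.

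The main obstacle is not conceptual but bookkeeping: one has to verify that the parametrization in the Main Theorem is consistent across the overlap at $n = 2$, where $V(2) \simeq \mathfrak{sl}(2)$ and the $\tfrac 12$-derivation produced by Lemma~\ref{lemma-2}(\ref{it-2n2}) coincides, up to scalar, with the identity map on the adjoint module identified by Lemma~\ref{lemma-simple}; in other words, the $\tfrac 12$-summand of the third direct sum must not be double-counted against the $\tfrac{2}{n+2}$-summand of the second direct sum when $n=2$. Assuming this overlap is read off correctly, the dimensions $n+3$, $n-1$, and $1$ appearing in the statement are exactly those of the bases produced by Lemma~\ref{lemma-2} and of the one-dimensional centroid in Lemma~\ref{lemma-simple}, and the whole classification follows.
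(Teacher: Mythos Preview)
You have proved the wrong statement. The item you were asked about is the \emph{Conjecture} in the final section of the paper, concerning semisimple Lie algebras over fields of characteristic $\ne 2,3$ (i.e., positive characteristic). The paper does \emph{not} prove this conjecture; it is explicitly posed as an open problem, and the surrounding discussion lists the difficulties that obstruct any direct adaptation of the characteristic-zero argument: the absence of complete reducibility, the lack of a full classification of irreducible representations, the fact that semisimple algebras are not direct sums of simples, and the existence of further rank-one exceptions such as the Zassenhaus algebras.

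What you have actually written is a proof sketch of the \emph{Main Theorem} (characteristic zero), and indeed your outline matches the paper's own proof of that theorem in \S5 essentially verbatim: dispose of $\delta=0,1$; use Weyl's theorem and Lemma~\ref{lemma-dirsum} to reduce to irreducible $V$; tensor-decompose $V$ and apply Lemma~\ref{lemma-4} to isolate a single simple factor $\mathfrak g_k$; then invoke Lemmas~\ref{lemma-2} and~\ref{lemma-simple}, noting the overlap at $n=2$. Your references to ``case~(i) of the statement'' and to ``the three indexed direct sums appearing in the statement'' make sense only for the Main Theorem, not for the Conjecture. In positive characteristic every one of these steps breaks: Weyl's complete reducibility fails, the tensor factorization of irreducibles over a semisimple algebra is not available because Block's theorem replaces the direct-sum decomposition by algebras of the form $S\otimes\mathcal O + \mathcal D$, and Lemmas~\ref{lemma-2} and~\ref{lemma-simple} are proved only over algebraically closed fields of characteristic zero. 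So as a proof of the Conjecture your proposal has a genuine gap at essentially every step, and as a comparison to the paper there is nothing to compare against, since the paper offers no proof.
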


To tackle this conjecture, one should overcome a number of difficulties
specific to positive characteristic. First, in addition to $\mathfrak{sl}(2)$,
there is another class of simple Lie algebras of rank one having nontrivial 
$\delta$-derivations with $\delta \ne 1$ -- namely, Zassenhaus algebras having 
nontrivial $\frac 12$-derivations (see \cite[\S 3]{filippov} and 
\cite[\S 2]{delta}); this suggests that the number of exceptional cases will be
(much) higher than in characteristic zero. Second, the description of 
irreducible representations of simple Lie algebras -- except for the simplest 
cases of algebras of low rank -- is lacking (and, quite possibly, such full 
description is hopeless). Third, arbitrary representations of simple Lie 
algebras are, generally, not direct sums of irreducibles. And fourth, semisimple
Lie algebras are, generally, not direct sums of simples.

The last difficulty, however, could be overcome using the classical Block 
theorem which says that in positive characteristic, semisimple Lie algebras are
sitting between the direct sum of Lie algebras of the form 
$S \otimes \mathcal O$, where $S$ is a simple Lie algebra, and $\mathcal O$ is a
divided powers algebra, and their derivation algebras. This, basically, reduces
questions about semisimple algebras to semidirect sums of the form
\begin{equation}\label{eq-sod}
S \otimes \mathcal O + \mathcal D ,
\end{equation}
where $\mathcal D$ is a derivation algebra of $\mathcal O$. It is proved in 
\cite[\S 1]{delta} that the space of $\delta$-derivations of the Lie algebra of
the form $L \otimes A$, where $L$ is a Lie algebra, and $A$ an associative 
commutative algebra, is, essentially, reduced to the space 
$\Der_\delta(L) \otimes A$, and an extension of this result to 
$\delta$-derivations of Lie algebras of the form (\ref{eq-sod}) with values in more-or-less arbitrary modules would, hopefully, allow to
reduce Conjecture from the semisimple case to the simple one.

Going back to characteristic zero, it would be also interesting to compute 
$\delta$-derivations of a (semi)sim\-ple finite-dimensional Lie algebra
$\mathfrak g$ with values in infinite-dimensional $\mathfrak g$-modules. The 
first cohomology of $\mathfrak g$ in this case does not necessarily vanish: this
follows from abstract nonsense (otherwise $\mathfrak g$ would be of 
cohomological dimension $1$), and for concrete examples of infinite-dimensional
$\mathfrak g$-modules with nonvanishing (first) cohomology see \cite{bavula},
\cite{williams}, and references therein.

\section*{Acknowledgments}

GAP \cite{gap}\footnote{
Using a Perl wrapper for GAP for solving linear equations on (nonassociative) 
algebras, available at \newline 
\texttt{https://web.osu.cz/$\sim$Zusmanovich/soft/lineq/ }
}
and Maxima \cite{maxima} were used to verify some of the computations performed
in this paper.
Arezoo Zohrabi was supported by grant SGS01/P\v{r}F/20-21 of the University of 
Ostrava.

\renewcommand{\refname}{Software}


\begin{thebibliography}{ZZ}

\bibitem[Ba]{bavula} V. Bavula, 
\emph{Computation of $H^*(sl(2),M)$ with coefficients in a simple $sl(2)$-module}, 
Funkts. Anal. Prilozh. \textbf{26} (1992), no.1, 57--58 (in Russian);
Funct. Anal. Appl. \textbf{26} (1992), no.1, 45--46 (English translation).

\bibitem[Bo]{bourbaki} N. Bourbaki,
\emph{Lie Groups and Lie Algebras. Chapters 7-9}, Springer, 2005 
(translated from the original French editions: Hermann, 1975 and Masson, 1982).

\bibitem[Fa]{farnsteiner} R. Farnsteiner, 
\emph{Derivations and central extensions of finitely generated graded Lie algebras}.
J. Algebra \textbf{118} (1988), no.1, 33--45.

\bibitem[Fi1]{filippov-5} V.T. Filippov,
\emph{Lie algebras satisfying identities of degree $5$},
Algebra Logika \textbf{34} (1995), no.6, 681--705 (in Russian);
Algebra and Logic \textbf{34} (1995), no.6, 379--394 (English translation).

\bibitem[Fi2]{filippov-98} \bysame,
\emph{On $\delta$-derivations of Lie algebras},
Sibirsk. Mat. Zh. \textbf{39} (1998), no.6, 1409--1422 (in Russian);
Siber. Math. J. \textbf{39} (1998), no.6, 1218--1230 (English translation).

\bibitem[Fi3]{filippov} \bysame, 
\emph{$\delta$-derivations of prime Lie algebras}, 
Sibirsk. Mat. Zh. \textbf{40} (1999), no.1, 201--213 (in Russian);
Siber. Math. J. \textbf{40} (1999), no.1, 174--184 (English translation).

\bibitem[H]{hopkins} N.C. Hopkins, 
\emph{Generalized derivations of nonassociative algebras}, 
Nova J. Math. Game Theory Algebra \textbf{5} (1996), no.3, 215--224.

\bibitem[J]{jacobson} N. Jacobson, \emph{Lie Algebras}, 
Interscience Publ., 1962; reprinted by Dover, 1979.

\bibitem[LL]{leger-luks} G.F. Leger and E.M. Luks, 
\emph{Generalized derivations of Lie algebras}, 
J. Algebra \textbf{228} (2000), no.1, 165--203. 

\bibitem[W]{williams} F.L. Williams, 
\emph{Lie algebra cohomology of infinite-dimensional modules}, 
Adv. Math. \textbf{35} (1980), no.1, 19--29.

\bibitem[ZZ]{octonion} A. Zohrabi and P. Zusmanovich, 
\emph{On Hermitian and skew-Hermitian matrix algebras over octonions},
J. Nonlin. Math. Phys., to appear.

\bibitem[Z]{delta} P. Zusmanovich, 
\emph{On $\delta$-derivations of Lie algebras and superalgebras}, 
J. Algebra \textbf{324} (2010), no.12, 3470--3486; 
Erratum: \textbf{410} (2014), 545--546.

\end{thebibliography}

\begin{thebibliography}{M}

\bibitem[G]{gap} \emph{GAP -- Groups, Algorithms, and Programming},
Version 4.10.2, 2019; \texttt{https://www.gap-system.org/}

\bibitem[M]{maxima} \emph{Maxima, a Computer Algebra System},
Version 5.41.0, 2017; \texttt{http://maxima.sourceforge.net/}

\end{thebibliography}
\end{document}